\newcommand{\ann}[1]{\operatorname{Ann}{(#1)}}
\newtheorem*{theoremA}{Theorem A}
\newtheorem*{theoremB}{Theorem B}
\newtheorem*{theoremC}{Theorem C}
\newtheorem*{lemma}{Lemma}
\newtheorem*{corollary}{Corollary}
\newtheorem*{Remark}{Remark}
\newenvironment{Proof}[1][Proof.]{\begin{trivlist}
\item[\hskip \labelsep {\bfseries #1}]}{\flushright
$\Box$\end{trivlist}}
\begin{document}
\noindent{\Large  
 The geometric classification of $2$-step nilpotent algebras \\ and applications
}\footnote{ The work of the first author was completed as a part of the implementation of the development program of the Scientific and Educational Mathematical Center Volga Federal District, agreement no. 075-02-2020-1488/1.
The work is supported by  FAPESP 19/00563-1,
RFBR 20-01-00030, CNPq 302980/2019-9.
The authors thank Sergey Pchelintsev for sharing the manuscript \cite{shaf}
and  the referee who has noticed numerous inaccuracies throughout the text and suggested several significant improvements in the overall exposition of the paper.
} 

    \medskip

   {\bf  Mikhail Ignatyev$^{a}$, Ivan Kaygorodov$^{b,c}$ \& Yury Popov$^{d}$\\

    \medskip

}

{\tiny

$^{a}$ Samara National Research University, Samara, Russia

$^{b}$ CMCC, Universidade Federal do ABC, Santo Andr\'e, Brazil

$^{c}$ Moscow Center for Fundamental and Applied Mathematics, Moscow, Russia

$^{d}$  Saint Petersburg State University, Russia

\

  \medskip

   E-mail addresses: 
   
\smallskip

Mikhail Ignatyev  (mihail.ignatev@gmail.com)

    Ivan Kaygorodov (kaygorodov.ivan@gmail.com) 

Yury Popov (yurippv@gmail.com)

}

  \medskip

\noindent {\bf Abstract.}
{\it We give a geometric classification of complex $n$-dimensional $2$-step nilpotent (all, commutative and anticommutative) algebras.
Namely,  it has been found the number of irreducible components and their dimensions.
As a corollary, we have a   geometric classification of 
complex $5$-dimensional nilpotent associative algebras.
In particular, it has been proven that this variety has $11$ irreducible components and $7$ rigid algebras. }

\ 

\noindent {\bf Keywords}: 
{\it Nilpotent algebra, 2-step nilpotent algebra, associative algebra, geometric classification,  irreducible component, degeneration.}

\noindent {\bf MSC2020}: 
17A30, 16S80, 16W22, 17A99, 17D30.

\section*{Introduction}
 Geometric properties of a variety of algebras defined by a family of polynomial identities have been an object of study since 1970's. Gabriel described the irreducible components of the variety of $4$-dimensional unital associative algebras~\cite{gabriel}. Mazzola classified algebraically and geometrically the variety of $5$-dimesional unital associative algebras~\cite{maz79}. 
 Cibils considered rigid associative algebras with $2$-step nilpotent radical \cite{cibils}.
 Burde and Steinhoff constructed the graphs of degenerations for the varieties of    $3$-dimensional and $4$-dimensional Lie algebras~\cite{BC99}. 
 Grunewald and O'Halloran calculated all degenerations for the variety of $5$-dimensional nilpotent Lie algebras~\cite{GRH}. 
Chouhy  proved that  in the case of finite-dimensional associative algebras,
 the $N$-Koszul property is preserved under the degeneration relation~\cite{chouhy}.
Degenerations have also been used to study the level of complexity of an algebra~\cite{gorb93,kv17}.
Given algebras ${\bf A}$ and ${\bf B}$ in the same variety, we write ${\bf A}\to {\bf B}$ and say that ${\bf A}$ {\it degenerates} to ${\bf B}$, or that ${\bf A}$ is a {\it deformation} of ${\bf B}$, if ${\bf B}$ is in the Zariski closure of the orbit of ${\bf A}$ (under the base-change action of the general linear group). The study of degenerations of algebras is very rich and closely related to deformation theory, in the sense of Gerstenhaber \cite{ger63}. It offers an insightful geometric perspective on the subject and has been the object of a lot of research.
In particular, there are many results concerning degenerations of algebras of small dimensions in a  variety defined by a set of identities (see, for example, \cite{klp20,kkp20, ikm20,GRH, ale3,BC99,cal} and references therein).
One of the main problems of the {\it geometric classification} of a variety of algebras is a description of its irreducible components.
From the geometric point of view, in many cases, the irreducible components of the variety are determined by the rigid algebras, i.e., algebras whose orbit closure is an irreducible component. It is worth mentioning that this is not always the case and  Flanigan had shown that the variety of $3$-dimensional nilpotent associative algebras has an irreducible component which does not contain any rigid algebras --- it is instead defined by the closure of a union of a one-parameter family of algebras \cite{fF68}. 

The variety of $2$-step nilpotent algebras is the intersection of all varieties of algebras defined by a family of polynomial identities of degree equal or more than three
(for example,  varieties of associative, assosymmetric, Novikov, Leibniz, Zinbiel, and so on).
And it plays an important role in the geometric classification of such algebras,
because each non-$2$-step nilpotent algebra from this variety degenerates to some $2$-step nilpotent algebras.
The same situation appears in commutative and anticommutative cases.
On the other hand, the variety of $2$-step nilpotent Lie algebras has a proper interest (for example, see \cite{lo14, aut, fb} and references therein).
A systematic study of $2$-step nilpotent algebras from the geometric point started from a paper of Shafarevich, where he described number and dimensions of irreducible components of the variety of $2$-step nilpotent commutative algebras \cite{shaf}.
Recently, it was proven that the variety of $n$-dimensional (all, commutative or anticommutative) nilpotent algebras is irreducible and the dimensions of these varieties were also calculated \cite{kkl21}.
The full graphs of degenerations 
in the varieties of  all  $4$-dimensional $2$-step nilpotent algebras  \cite{kppvcor} and
 all  $8$-dimensional $2$-step nilpotent anticommutative algebras  \cite{ale3} are constructed. 
In the first part of our paper, we are following the ideas from  \cite{shaf} for finding the number and dimensions of irreducible components in the variety of $n$-dimensional $2$-step (all, commutative and anticommutative) nilpotent algebras (Theorem A). 
After that, we are discussing in detail the case of $5$-dimensional $2$-step nilpotent algebras.
Algebraic classification of $5$-dimensional $2$-step nilpotent algebras is a very "wild" problem,
but using the results from the first part of our paper, it will be possible to take a geometric classification of these algebras (Theorem B), 
which is the principal tool for future geometric classifications of $5$-dimensional (nilpotent, solvable or all) 
associative, antiassociative, assosymmetric, bicommutative, Novikov, symmetric Leibniz, Zinbiel, etc. varieties of algebras.

Recently, an algebraic classification of complex $5$-dimensional nilpotent (non-$2$-step nilpotent) associative algebras was obtained in \cite{karim20}.
Early, a tentative geometric classification of these algebras was given in \cite{mak93},
but unfortunately, as we can see from our result, it was completely wrong.
In the present paper, using a geometric classification of complex $5$-dimensional $2$-step nilpotent algebras (Theorem B) and an algebraic classification of  
complex $5$-dimensional  nilpotent (non-$2$-step nilpotent) associative algebras \cite{karim20},
we give the complete geometric classification of complex  $5$-dimensional  nilpotent associative algebras (Theorem C).

\section{Around some results of Shafarevich} 

For $k \le n$ consider the (algebraic) subset $\mathfrak{Nil}_{n,k}^2$ of the variety $\mathfrak{Nil}_n^2$ of 2-step nilpotent $n$-dimensional algebras defined by 
\[
\mathfrak{Nil}_{n,k}^2 = \{A \in \mathfrak{Nil}_n^2 : \dim A^2 \le k, \ \dim \ann A \ge k \}.
\]
It is easy to see that $\mathfrak{Nil}_n^2 = \cup_{k=1}^n\mathfrak{Nil}_{n,k}^2.$ Analogously for the varieties $\mathfrak{Nil}_n^2{}^{\mathrm{c}}, \mathfrak{Nil}_n^2{}^{\mathrm{ac}}$ of commutative and and anticommutative 2-step nilpotent algebras we define the subsets $\mathfrak{Nil}_{n,k}^2{}^{\mathrm{c}}$ and $\mathfrak{Nil}_{n,k}^2{}^{\mathrm{ac}},$ respectively.

This theorem is completely analogous to \cite[Theorem 1]{shaf}, but we provide its proof to ensure that the paper is self-contained.

\begin{theoremA}
\label{th:theogalv}
The sets $\mathfrak{Nil}_{n,k}^2$ are irreducible and 
\begin{equation}
\label{eq:nil2_decomp}
\mathfrak{Nil}_n^2 = \bigcup_{k}\mathfrak{Nil}_{n,k}^2, \quad \text{for} \quad 1 \le k \le \left\lfloor n + \frac{1 - \sqrt{4n+1}}{2}\right\rfloor
\end{equation}
is the decomposition of $\mathfrak{Nil}_n^2$ into irreducible components. Analogously, we have decompositions: 
\begin{longtable}{lclll}
$\mathfrak{Nil}_n^2{}^{\mathrm{c}}$ &$=$& $\bigcup_{k}\mathfrak{Nil}_{n,k}^2{}^{\mathrm{c}},$ & for & $1 \le k \le \left\lfloor n + \frac{3 - \sqrt{8n+9}}{2}\right\rfloor,$\\
$\mathfrak{Nil}_n^2{}^{\mathrm{ac}}$ &$=$ & $\bigcup_{k}\mathfrak{Nil}_{n,k}^2{}^{\mathrm{ac}},$ & for  & $1 + (n + 1) \operatorname{mod} 2 \le k \le \left\lfloor n + \frac{1 - \sqrt{8n+1}}{2}\right\rfloor \text{ for } n \ge 3.$\\
\end{longtable}
Moreover,
\begin{longtable}{lcl}
$\dim \mathfrak{Nil}_{n,k}^2$ &$=$ &$(n-k)^2k + (n-k)k,$\\
$\dim \mathfrak{Nil}_{n,k}^2{}^{\mathrm{c}}$ &$=$ & $\frac{(n-k)(n-k+1)}{2}k + (n-k)k,$\\
$\dim \mathfrak{Nil}_{n,k}^2{}^{\mathrm{ac}}$ &$=$ & $\frac{(n-k)(n-k-1)}{2}k + (n-k)k.$
\end{longtable}
\end{theoremA}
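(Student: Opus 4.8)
The plan is to analyze each set $\mathfrak{Nil}_{n,k}^2$ via a parametrization by a linear-algebraic datum, establish irreducibility by exhibiting it as the image of an irreducible variety under a morphism, compute the dimension by a fiber-dimension count, and finally determine which $k$ give genuine irreducible components by comparing closures and dimensions across different values of $k$.

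First I would set up the parametrization. A $2$-step nilpotent algebra $A$ with $\dim A^2 \le k$ and $\dim \operatorname{Ann} A \ge k$ is, up to isomorphism, determined by choosing a complement $W$ (of dimension $n-k$) to a chosen $k$-dimensional subspace $V \supseteq A^2$ contained in the annihilator, together with a bilinear map $\mu\colon W \times W \to V$ (arbitrary in the "all" case, symmetric in the commutative case, alternating in the anticommutative case), since $2$-step nilpotence forces $\mu$ to vanish as soon as one argument lies in $V$. Concretely, after fixing a basis, the structure constants live in an affine space of dimension $(n-k)^2 k$ (resp. $\tfrac{(n-k)(n-k+1)}{2}k$, resp. $\tfrac{(n-k)(n-k-1)}{2}k$), and the group $\mathrm{GL}_n$ acts by base change; the locus $\mathfrak{Nil}_{n,k}^2$ is the union of the $\mathrm{GL}_n$-orbits of this affine space of "adapted" structure constants. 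Thus $\mathfrak{Nil}_{n,k}^2$ is the image of the irreducible variety $\mathrm{GL}_n \times (\text{affine space of } \mu\text{'s})$ under the action morphism, hence irreducible; this also proves the union over $k$ is a decomposition into irreducible subsets (closedness of each $\mathfrak{Nil}_{n,k}^2$ follows because $\dim A^2 \le k$ and $\dim \operatorname{Ann} A \ge k$ are closed conditions).

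Next I would compute $\dim \mathfrak{Nil}_{n,k}^2$. Working with the generic point, a general $\mu$ in the adapted family has $\dim A^2 = \min(k, \tfrac{(n-k)(n-k+1)}{2})$ etc.\ equal to $k$ exactly in the stated ranges of $k$; on that range the "adapted frame" is essentially unique up to the stabilizer, and a standard orbit-dimension argument gives $\dim \mathfrak{Nil}_{n,k}^2 = \dim(\text{affine space of }\mu) + \dim(\text{choice of }W\text{ inside the }(n)\text{-space modulo }\operatorname{GL}(V))$. The second term is the dimension of the Grassmannian-type data measuring how $W$ sits relative to $V$, which contributes $(n-k)k$; adding the first term yields the three displayed formulas. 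Here one must be careful that when $k$ exceeds the stated bound the generic algebra in the naive family actually has smaller $A^2$, so it already lies in $\mathfrak{Nil}_{n,k-1}^2$ or lower, i.e.\ $\mathfrak{Nil}_{n,k}^2 \subseteq \mathfrak{Nil}_{n,k'}^2$ for some smaller $k'$ and contributes no new component — this is precisely why the index ranges in \eqref{eq:nil2_decomp} and the two subsequent lines are cut off where they are (the bounds $\lfloor n + \tfrac{1-\sqrt{4n+1}}{2}\rfloor$ etc.\ are exactly where $k = \binom{n-k}{?}$-type inequalities flip), and the lower bound $1 + (n+1)\bmod 2$ in the anticommutative case reflects that an alternating form on an odd-dimensional space is degenerate.

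Finally, to extract the irreducible components from the irreducible pieces, I would show that for $k$ in the admissible range no $\mathfrak{Nil}_{n,k}^2$ is contained in another: containments can only go from larger $k$ to smaller $k$ (since larger $k$ means more annihilator), and within the admissible range the dimension formula is strictly monot§onic in the appropriate direction (one checks the discrete derivative in $k$ of $(n-k)^2k + (n-k)k$, etc., does not vanish on the range), so none of these closed irreducible sets can sit inside another, and each is maximal, hence an irreducible component. The main obstacle I expect is the bookkeeping in the dimension/fiber computation — precisely identifying the stabilizer of a generic adapted $\mu$ and confirming that the generic rank of $\mu$ equals $k$ exactly on the claimed range of $k$ (equivalently, pinning down the floor-function bounds), since this is where the quadratic-in-$k$ thresholds and the parity phenomenon in the anticommutative case enter; the irreducibility itself is soft, and the non-containment is a routine monotonicity check once the dimensions are in hand.
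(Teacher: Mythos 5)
Your setup, your irreducibility argument (each $\mathfrak{Nil}_{n,k}^2$ as the image of the irreducible variety $\operatorname{GL}_n\times(\text{adapted structure constants})$ under the action morphism) and your fiber-dimension count all agree with the paper's proof, and your explanations of the two cutoffs (the quadratic bound coming from $k\le(n-k)^2$ and its symmetric/alternating analogues, and the degeneracy of an odd-size alternating form in the anticommutative case) are the right ones. The genuine gap is in your last step, where you argue that the pieces $\mathfrak{Nil}_{n,k}^2$ are pairwise non-contained. First, the dimension $(n-k)^2k+(n-k)k=(n-k)(n-k+1)k$ is \emph{not} monotonic in $k$ on the admissible range (for $n=5$ it takes the values $20,24,18$ at $k=1,2,3$), so the strict monotonicity you invoke fails. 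Second, and more seriously, even if all dimensions were distinct this would only exclude containment of a higher-dimensional irreducible set in a lower-dimensional one; it says nothing about whether, say, the $18$-dimensional $\mathfrak{Nil}_{5,3}^2$ sits inside the $24$-dimensional $\mathfrak{Nil}_{5,2}^2$. Your preliminary claim that containments can only go from larger $k$ to smaller $k$ is also unjustified as stated; indeed the anticommutative exception $\mathfrak{Nil}_{n,1}^2{}^{\mathrm{ac}}\subseteq\mathfrak{Nil}_{n,2}^2{}^{\mathrm{ac}}$ for even $n$ is a containment in the opposite direction.

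What is needed, and what the paper does, is to exhibit for each admissible $k$ a nonempty open subset $U_{n,k}\subseteq\mathfrak{Nil}_{n,k}^2$ consisting of algebras with $\dim A^2=\dim\operatorname{Ann}(A)=k$ exactly. A point of $U_{n,k}$ lies in no $\mathfrak{Nil}_{n,k'}^2$ with $k'\ne k$: for $k'<k$ because $\dim A^2=k>k'$, and for $k'>k$ because $\dim\operatorname{Ann}(A)=k<k'$. Hence $\mathfrak{Nil}_{n,k}^2\not\subseteq\bigcup_{k'\ne k}\mathfrak{Nil}_{n,k'}^2$ and each piece is maximal, i.e.\ an irreducible component. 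You already observe that the generic adapted $\mu$ has $\dim A^2=k$ exactly on the stated range; you need to upgrade this to nonemptiness of $U_{n,k}$ (which also requires the annihilator to be exactly $k$-dimensional, i.e.\ the $k$ component forms to have no common kernel vector --- this is precisely where the anticommutative parity obstruction appears) and then run the disjointness argument above in place of the dimension comparison.
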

\begin{proof}
We prove the result for the variety $\mathfrak{Nil}_n^2,$ for the varieties $\mathfrak{Nil}_n^2{}^{\mathrm{c}}$ and $\mathfrak{Nil}_n^2{}^{\mathrm{ac}}$ the proofs are analogous. Let $A \in \mathfrak{Nil}_n^2$ be an algebra and let $k = \dim A^2.$ Since the square of $A$ lies in its annihilator, it must be spanned by other $n-k$ basis vectors. Therefore, we must have $k \le (n-k)^2$ and we have the decomposition \eqref{eq:nil2_decomp}.

Let us show that the sets $\mathfrak{Nil}_{n,k}^2$ are irreducible. For $k$ as in \eqref{eq:nil2_decomp} consider the set $S_{n,k} \subset \mathfrak{Nil}_{n,k}^2$ of algebras with the following multiplication tables:
\begin{equation}
\begin{gathered}
\label{eq:Snk_mult_table}
e_ie_j = \sum_{p=1}^kc_{ij}^{n-k+p}e_{n-k+p},\quad
e_ie_{n-k+p} = 0,\quad
e_{n-k+p}e_{n-k+s} = 0,\\
i,j = 1,\ldots, n-k,\quad
p,s = 1,\ldots,k.
\end{gathered}
\end{equation}
Because $S_{n,k}$ is irreducible (indeed, it is isomorphic to $\mathbb{C}^{(n-k)^2k}$), it must lie in a unique irreducible component of $\mathfrak{Nil}_{n,k}^2.$ However, it is easy to see that every algebra from $\mathfrak{Nil}_{n,k}^2$ is $\operatorname{GL}_n(\mathbb{C})$-conjugated to an algebra from $\mathfrak{Nil}_{n,k}^2$ (that is, one can choose a basis in that algebra such that the multiplication table in this basis is of the form \eqref{eq:Snk_mult_table}). Since the group $\operatorname{GL}_n(\mathbb{C})$ is connected, its action on $\mathfrak{Nil}_{n,k}^2$ must preserve irreducible components. This shows that $\mathfrak{Nil}_{n,k}^2$ is itself irreducible.

Consider a subset $U_{n,k} \subset \mathfrak{Nil}_{n,k}^2$ given by $U_{n,k} = \{A \in \mathfrak{Nil}_n^2 : \dim A^2 = k,\ \dim \ann A = k \}.$ As we have seen, the sets $\{A \in \mathfrak{Nil}_n^2 : \dim A^2 \le l\}$ and $\{A \in \mathfrak{Nil}_n^2 : \dim \ann A \ge s\}$ are algebraic for all $l$ and $s,$ thus the set $U_{n,k}$ is open in $\mathfrak{Nil}_{n,k}^2.$ However, the sets $U_{n,k}$ and $U_{n,k'}$ have empty intersection for $k' \ne k,$ therefore $\mathfrak{Nil}_{n,k}^2 \not\subseteq \cup_{k'\ne k}\mathfrak{Nil}_{n,k'}^2$ and the decomposition \eqref{eq:nil2_decomp} is indeed the decomposition of $\mathfrak{Nil}_n^2$ into irreducible components. The same proof is valid for $\mathfrak{Nil}_n^2{}^c.$ However, the set $U_{n,1} \cap \mathfrak{Nil}^2{}^{\mathrm{ac}}$ is empty for an even $n$: indeed, let $A^2 = \langle e \rangle$ for $e \in A,$ where $(A,\mu)$ is an anticommutative 2-step nilpotent algebra. Then the matrix corresponding to the map $\operatorname{proj}_e \circ \mu: V \otimes V \to \mathbb{F},$ where $\operatorname{proj}_e$ is the projection to the space generated by $e$ and $V$ is a vector space complement of $\langle e \rangle$ to $A$ is skew-symmetric and is of odd size, therefore it must be degenerate. This means that there is a nonzero vector in $V \cap \ann A$ and $\dim \ann A \ge 2.$ Particularly, we must have $\mathfrak{Nil}_{n,1}^2{}^{\mathrm{ac}} \subseteq \mathfrak{Nil}_{n,2}^2{}^{\mathrm{ac}}$ if $n \ge 3.$ However, for any $n, 2 \le k \le (n-k)(n-k-1)/2$ the sets $U_{n,k} \cap \mathfrak{Nil}^2{}^{\mathrm{ac}}$ are nonempty (and open in $\mathfrak{Nil}_{n,k}^2{}^{\mathrm{ac}}$). Indeed, if $A^2 = \langle e_{n-k+1}, \ldots, e_n \rangle$ where $k$ is as above and $V$ is a vector space complement of $A^2$ to $A$, then considering the matrices $A_k$ corresponding to the maps $\operatorname{proj}_{e_{n-k+p}} \circ \mu: V \otimes V \to \mathbb{F},\ p = 1, \ldots, k,$ one can see that the condition $A \in U_{n,k}$ is equivalent to the condition that matrices $A_k$ are linearly independent and their kernels have zero intersection. Choosing an appropriate basis in the space of the skew-symmetric matrices of size $(n-k),$ it is easy to see that this condition indeed defines an open subset. 

Let us now calculate the dimension of $\mathfrak{Nil}_{n,k}^2.$ Considering the map
\begin{align*}
\operatorname{GL}_n(\mathbb{C}) \times S_{n,k} &\to \operatorname{GL}_n(\mathbb{C}) \cdot S_{n,k} = \mathfrak{Nil}_{n,k}^2,\\
(g,A) &\mapsto g\cdot A,
\end{align*}
we get 
\begin{equation}
\label{eq:dim_by_map}
\dim \mathfrak{Nil}_{n,k}^2 = \dim S_{n,k} + \dim \operatorname{GL}_n(\mathbb{C}) - \dim F, 
\end{equation}
where $F$ is the preimage of a generic point of $\mathfrak{Nil}_{n,k}^2$ with respect to this mapping. Consider the preimage of an algebra $A \in S_{n,k} \cap U_{n,k}.$ If $g\cdot A' = A$ for some $g \in \operatorname{GL}_n(\mathbb{C}),\ A' \in S_{n,k},$ then in the basis $g^{-1}(e_1), \ldots, g^{-1}(e_{n-k}), g^{-1}(e_{n-k+1}), \ldots, g^{-1}(e_n)$ the algebra $A'$ has mutliplication table of the form \eqref{eq:Snk_mult_table}. But since $A \in U_{n,k},$ we must have $A^2 = \langle e_{n-k+1}, \ldots, e_n \rangle = \langle g^{-1}(e_{n-k+1}), \ldots, g^{-1}(e_n) \rangle.$ Therefore, $g$ must lie in the group preserving the space $A^2$ whose dimension is $n^2 - k(n-k)$ (note that it acts transitively on $A^2$). Substituting all dimensions in \eqref{eq:dim_by_map}, we get the dimension of $\mathfrak{Nil}_{n,k}^2$ as stated.
\end{proof}

\section{The geometric classification of complex $5$-dimensional $2$-step nilpotent algebras}

\subsection{$\mathfrak{Nil}_{n+1,1}^2$}
Let $A$ be an algebra from $\mathfrak{Nil}_{n+1,1}^2.$
If $A$ has the multiplication table $e_ie_j= {\mathfrak a}_{ij} e_{n+1},$ 
then it can be identified with an $n \times n$ matrix ${\bf A}= ({\mathfrak a}_{ij}).$
The problem of description of isomorphism classes of $\mathfrak{Nil}_{n+1,1}^2$ is then reduced to the calculation of equivalence classes of $n\times n$ matrices under congruence, which was considered in \cite{hs08}.
Thanks to \cite[Theorem 2.1]{hs08}, 
over the complex field, every
square matrix is congruent to a direct sum
of matrices of the form (determined uniquely up to permutation of summands, see \cite{hs08}):
\begin{enumerate}
    \item $J_k(0);$  

    \item $ [ J_k(\lambda)\backslash I_k ];$

    \item $\Gamma_k,$ 
\end{enumerate}
where $J_k(\lambda)$ is the Jordan block of size $k$ corresponding to the eigenvalue $\lambda,$ $I_k$ is the identity matrix of size $k,$ $[A \backslash B] = \begin{pmatrix}0 & B \\ A & 0\end{pmatrix}$ is the skew sum of matrices $A$ and $B$ and 
\[
\Gamma_n = 
\begin{pmatrix}
0 & & & & & \udots\\
& & & & 1 & \udots \\
& & & -1 & 1 & \\
& & 1 & 1 & & \\
& -1 & -1 & & & \\
1 & 1 & & & &
\end{pmatrix}.
\]
Recalling that $[J_1(\lambda_i)\backslash I] =
\left(\begin{array}{cc}
0 & 1\\ 
\lambda_i &0 \end{array} \right)$ and  
let us define a matrix ${\mathfrak A}_n$ as 
\begin{enumerate}
    \item $
{\mathfrak A}_{2t}={\rm diag} \{[J_1(\lambda_1)\backslash I], \ldots, [J_1(\lambda_t)\backslash I]\},$
\item $
{\mathfrak A}_{2t+1}= {\rm diag} \{[J_1(\lambda_1)\backslash I], \ldots, [J_1(\lambda_t)\backslash I], \Gamma_1\}.$
\end{enumerate}
Then ${\bf H}(\lambda_1, \ldots, \lambda_{\lfloor{n/2} \rfloor})$ is the  $(n+1)$-dimensional algebra defined by the matrix ${\mathfrak A}_n.$ 
After a careful calculation of the dimension of the space of derivations of all algebras from this family we conclude that $\dim \operatorname{Der}{\bf H}(\lambda_1, \ldots, \lambda_{\left\lfloor \frac{n}{2} \right\rfloor})$ is generically $\left\lfloor \frac{3n}{2} \right\rfloor + 1,$ therefore, the dimension of algebraic variety which defined this family is 
\[
(n+1)^2 - \dim \operatorname{Der}{\bf H}(\lambda_1, \ldots, \lambda_{\left\lfloor \frac{n}{2} \right\rfloor}) + \left\lfloor \frac{n}{2} \right\rfloor = n(n+1)
\]
Hence, \cref{th:theogalv} gives the following obvious corollary

\begin{corollary}
\label{cor:niln1}
The variety $\mathfrak{Nil}_{n,1}^2$ is  defined by the family of algebras 
${\bf H}(\lambda_1, \ldots, \lambda_{\left\lfloor \frac{n}{2} \right\rfloor}).$
In particular,  $\mathfrak{Nil}_{5,1}^2$  is defined by the following $2$-parameters family of algebras
\[{\bf H}(\lambda, \mu) \ : \ e_1e_2=e_5 \ \ e_2e_1=\lambda e_5 \ \ e_3e_4=e_5 \ \ e_4e_3=\mu e_5,
\]
which  will be called ${\mathfrak V}_{4+1}.$
\end{corollary}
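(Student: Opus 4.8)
The plan is to deduce the corollary from \cref{th:theogalv} together with the dimension bookkeeping for the family $\mathbf{H}$ carried out just above. Applying \cref{th:theogalv} with $n$ replaced by $n+1$ and $k=1$ shows that $\mathfrak{Nil}_{n+1,1}^2$ is an irreducible component of $\mathfrak{Nil}_{n+1}^2$ of dimension $((n+1)-1)^2+((n+1)-1)=n(n+1)$. It therefore suffices to check that the morphism $\Psi\colon\operatorname{GL}_{n+1}(\mathbb{C})\times\mathbb{C}^{\lfloor n/2\rfloor}\to\mathfrak{Nil}_{n+1,1}^2$, $(g,\lambda)\mapsto g\cdot\mathbf{H}(\lambda)$, has image whose closure has dimension $n(n+1)$: since $\overline{\operatorname{Im}\Psi}$ is then an irreducible closed subset of the irreducible variety $\mathfrak{Nil}_{n+1,1}^2$ of the same dimension, the two coincide, which is precisely the assertion that $\mathbf{H}$ defines $\mathfrak{Nil}_{n+1,1}^2$. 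The containment $\operatorname{Im}\Psi\subseteq\mathfrak{Nil}_{n+1,1}^2$ is clear because $\mathfrak{A}_n$ is an $n\times n$ matrix, so $\dim\mathbf{H}(\lambda)^2\le 1$.

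To compute $\dim\overline{\operatorname{Im}\Psi}$ I would use that for an irreducible source $\dim\overline{\operatorname{Im}\Psi}=\dim\bigl(\operatorname{GL}_{n+1}(\mathbb{C})\times\mathbb{C}^{\lfloor n/2\rfloor}\bigr)-\dim\Psi^{-1}(\text{generic point})=(n+1)^2+\lfloor n/2\rfloor-\dim\Psi^{-1}(\text{generic point})$. Fix a generic $\lambda_0$ and set $A=\mathbf{H}(\lambda_0)$. The fibre $\Psi^{-1}(g_0\cdot A)$ consists of the pairs $(g,\lambda)$ with $g\cdot\mathbf{H}(\lambda)=g_0\cdot A$; recalling from the preceding section that the isomorphism class of $\mathbf{H}(\lambda)$ is determined by the congruence class of $\mathfrak{A}_n$ (a direct sum of $2\times 2$ blocks $[J_1(\lambda_i)\backslash I]$, with a trailing block $1$ when $n$ is odd), the uniqueness part of \cite[Theorem 2.1]{hs08} shows that the admissible $\lambda$ form a finite set (the permutations of $\lambda_0$ and the symmetries $\lambda_i\mapsto\lambda_i^{-1}$), and over each such $\lambda$ the corresponding $g$ range over a coset of the stabilizer of $g_0\cdot A$, which is conjugate to $\operatorname{Aut}(A)$. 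Hence the generic fibre has dimension $\dim\operatorname{Aut}\mathbf{H}(\lambda_0)=\dim\operatorname{Der}\mathbf{H}(\lambda_1,\dots,\lambda_{\lfloor n/2\rfloor})=\lfloor 3n/2\rfloor+1$ by the derivation count recorded above, so $\dim\overline{\operatorname{Im}\Psi}=(n+1)^2+\lfloor n/2\rfloor-\lfloor 3n/2\rfloor-1=(n+1)^2-n-1=n(n+1)$, using $\lfloor 3n/2\rfloor-\lfloor n/2\rfloor=n$. This gives the first assertion.

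For the particular case one specializes to $n=4$, so that $\mathbf{H}$ is the family of $5$-dimensional algebras defined by $\mathfrak{A}_4=\operatorname{diag}\{[J_1(\lambda)\backslash I],[J_1(\mu)\backslash I]\}$; reading off this block-diagonal matrix gives $\mathfrak{a}_{12}=\mathfrak{a}_{34}=1$, $\mathfrak{a}_{21}=\lambda$, $\mathfrak{a}_{43}=\mu$ and all other $\mathfrak{a}_{ij}=0$, i.e.\ the algebra $\mathfrak{V}_{4+1}\cong\mathbf{H}(\lambda,\mu)$ with multiplication $e_1e_2=e_5$, $e_2e_1=\lambda e_5$, $e_3e_4=e_5$, $e_4e_3=\mu e_5$, as claimed.

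The main obstacle — and the one place where genuine computation is hidden — is the derivation count invoked above, namely that $\dim\operatorname{Der}\mathbf{H}(\lambda_1,\dots,\lambda_{\lfloor n/2\rfloor})$ is generically $\lfloor 3n/2\rfloor+1$. Establishing it means writing out the linear system defining a derivation of the block-diagonal structure constants $\mathfrak{A}_n$ and verifying that for pairwise distinct generic $\lambda_i$ the $2\times 2$ blocks decouple (each eigenvalue-pair block contributing essentially only its own scalings, and the trailing block $1$ for odd $n$ accounting for the extra summand $+1$), so that no off-diagonal derivations survive. A secondary but routine point is the finiteness of the generic fibre of the assignment $\lambda\mapsto$ isomorphism class, which I would justify by the uniqueness part of \cite[Theorem 2.1]{hs08}; granting these two facts, everything else is formal and follows from \cref{th:theogalv}.
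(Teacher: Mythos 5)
Your proposal is correct and follows essentially the same route as the paper: the paper's ``proof'' is exactly the discussion preceding the corollary, namely that the orbit closure of the family ${\bf H}(\lambda_1,\ldots,\lambda_{\lfloor n/2\rfloor})$ has dimension $(n+1)^2-\dim\operatorname{Der}{\bf H}+\lfloor n/2\rfloor=n(n+1)$, which matches the dimension of the irreducible set $\mathfrak{Nil}_{n+1,1}^2$ given by Theorem~A, forcing equality. You merely make explicit the fibre-dimension bookkeeping (finiteness of the admissible $\lambda$ via the uniqueness in \cite[Theorem 2.1]{hs08} and the identification of the stabilizer with $\operatorname{Aut}$), while leaving the generic derivation count $\lfloor 3n/2\rfloor+1$ unverified, exactly as the paper does.
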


\subsection{$\mathfrak{Nil}_{5,2}^2$}
Let $A$ be an algebra from $\mathfrak{Nil}_{5,2}^2,$
then if $A$ has the following multiplication table 
\begin{center}
$e_ie_j= {\mathfrak a}_{ij} e_{4}+{\mathfrak b}_{ij} e_{5}$ \ $(1 \leq i,j \leq 3)$,
\end{center}
  it can be identified with two $3 \times 3$ matrices 
${\bf A}= ({\mathfrak a}_{ij})$ and 
${\bf B}= ({\mathfrak b}_{ij}).$
Let us denote $A$ as $\langle\langle{\bf A}:{\bf B}\rangle\rangle.$
The problem of description of isomorphism classes of $\mathfrak{Nil}_{5,2}^2$ is 
reduced to the description of equivalence classes of 
vector spaces generated by two 
$3\times 3$ matrices under the action of ${\rm GL}_3.$ 
    
The variety $\mathfrak{Nil}_{5,2}^2$ is defined by an $18$-dimensional family of algebras constructed from matrices ${\bf A}= ({\mathfrak a}_{ij})$ and 
${\bf B}= ({\mathfrak b}_{ij}),$ where ${\mathfrak a}_{ij}$ and ${\mathfrak b}_{ij}$ are parameters.
Thanks to \cref{cor:niln1} and \cite{hs08}, the family of algebras  
$\langle\langle{\bf A}:{\bf B}\rangle\rangle$ is in the orbit closure of the
$10$-parameters family of algebras 
$\langle\langle \left(\begin{array}{ccc}
1 & 0 & 0\\ 
0& 0 &1\\ 
0& \lambda &0\\ 
\end{array} \right) :{\bf B}\rangle\rangle,$
where $\lambda$ and ${\mathfrak b}_{ij}$ are parameters. 
Obviously, the last variety is in the orbit closure of the $8$-dimensional family of algebras
$\langle\langle 
\left(\begin{array}{ccc}
1 & 0 & 0\\ 
0& 0 &1\\ 
0& \lambda &0\\ 
\end{array} \right) :
\left(\begin{array}{ccc}
0 & \mu_1 & \mu_2\\ 
\mu_3& \mu_4 &\mu_5\\ 
\mu_6& \mu_7 & 1\\ 
\end{array} \right) 
\rangle\rangle,$ where $\lambda$ and $\mu_i$ are parameters.
The last family of algebras, which has the following multiplication table

\begin{longtable}{lllllll}
  & &
$e_1e_1 =  e_4$& $e_1e_2 = \mu_1 e_5$ & $e_1e_3 =\mu_2 e_5$& 
$e_2e_1 = \mu_3 e_5$  & $e_2e_2 = \mu_4 e_5$  \\
& & $e_2e_3 = \mu_5 e_5$  & $e_3e_1 = \mu_6 e_5$  & 
$e_3e_2 = \lambda e_4+ \mu_7 e_5$ & $e_3e_3 =  e_5,$  \\
\end{longtable}
which  will be  denote as $\mathfrak V_{3+2}.$
Hence, the variety $\mathfrak{Nil}_{5,2}^2$ is defined by   ${\mathfrak V}_{3+2}.$

\begin{Remark}
After a careful calculation of the dimension of the algebra of derivations of all algebras from   ${\mathfrak V}_{3+2},$ we conclude, that between its $8$ parameters, there are $6$ independent parameters and $2$ dependent of  others.
\end{Remark}

\subsection{$\mathfrak{Nil}_{5,3}^2$}

Thanks to \cite{cal} we have a classification of all non-isomorphic non-split algebras from the variety  $\mathfrak{Nil}_{5,3}^2.$
As we  know, it is an irreducible variety.
After a carefully checking of their geometric dimensions, 
we have that $\mathfrak{Nil}_{5,3}^2$ is defined by the following family of algebras
\[
A_{133}(\lambda)  \ : \  e_1e_1 = e_3 + \lambda e_5 \ \  e_1e_2 = e_3 \ \ e_2e_1 = e_4 \ \ e_2e_2 = e_5,
\]
which will be called ${\mathfrak V}_{2+3}.$

\subsection{Classification theorem} Summarizing all ideas from the present section, we have the following statement.
\begin{theoremB} 
The variety of complex $5$-dimensional $2$-step nilpotent   algebras  has 
dimension $24$  and it has 
$3$ irreducible components
defined by  
  
\begin{center}
$\mathcal{C}_1=\overline{\{\mathcal{O}({\mathfrak V}_{2+3} )\}},$ \ 
$\mathcal{C}_2=\overline{\{\mathcal{O}({\mathfrak V}_{3+2})\}},$ \
$\mathcal{C}_3=\overline{\{\mathcal{O}({\mathfrak V}_{4+1})\}}.$

\end{center}

\end{theoremB}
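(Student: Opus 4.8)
The plan is to apply Theorem A with $n = 5$ to identify the irreducible components of $\mathfrak{Nil}_5^2$ and then to recognize each component as the orbit closure of one of the three families constructed in the subsections. First I would compute the range of admissible $k$ from \eqref{eq:nil2_decomp}: for $n = 5$ we need $k \le (5-k)^2$, which holds for $k = 1, 2, 3$ (since $3 \le 4$ but $4 > 1$), and the floor formula $\left\lfloor 5 + \frac{1 - \sqrt{21}}{2}\right\rfloor = \left\lfloor 5 - 1.79\ldots\right\rfloor = 3$ confirms this. Hence Theorem A gives exactly three irreducible components $\mathfrak{Nil}_{5,1}^2$, $\mathfrak{Nil}_{5,2}^2$, $\mathfrak{Nil}_{5,3}^2$, with dimensions $(n-k)^2k + (n-k)k$ equal to $16 + 4 = 20$, $9 + 6$... wait, $(5-2)^2 \cdot 2 + (5-2)\cdot 2 = 18 + 6 = 24$, and $(5-3)^2\cdot 3 + (5-3)\cdot 3 = 12 + 6 = 18$. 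So the maximal dimension is $24$, attained on $\mathfrak{Nil}_{5,2}^2$, giving $\dim \mathfrak{Nil}_5^2 = 24$.

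Next I would match each component to its defining family. For $k = 3$: by the subsection on $\mathfrak{Nil}_{5,3}^2$ (citing \cite{cal}), the component $\mathfrak{Nil}_{5,3}^2 = \mathfrak{Nil}_{2+3}^2$ is the orbit closure of the family ${\mathfrak V}_{2+3} \cong A_{133}(\lambda)$, so $\mathcal{C}_1 = \overline{\{\mathcal{O}({\mathfrak V}_{2+3})\}}$. For $k = 1$: by Corollary \ref{cor:niln1}, $\mathfrak{Nil}_{5,1}^2 = \mathfrak{Nil}_{4+1}^2$ is defined by ${\mathfrak V}_{4+1} \cong {\bf H}(\lambda,\mu)$, giving $\mathcal{C}_3 = \overline{\{\mathcal{O}({\mathfrak V}_{4+1})\}}$; here one also checks the dimension count $n(n+1) = 4 \cdot 5 = 20$ agrees with Theorem A's value for $\dim \mathfrak{Nil}_{5,1}^2$. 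For $k = 2$: by the subsection on $\mathfrak{Nil}_{5,2}^2$, the component is the orbit closure of the $8$-parameter family ${\mathfrak V}_{3+2}$, and one verifies that the reductions $\langle\langle {\bf A}^g : {\bf B}^g \rangle\rangle \leadsto \langle\langle \mathrm{diag}\text{-form} : {\bf B}^g \rangle\rangle \leadsto {\mathfrak V}_{3+2}$ all stay in the orbit closure, so $\mathcal{C}_2 = \overline{\{\mathcal{O}({\mathfrak V}_{3+2})\}}$; the dimension should come out to $24$, matching Theorem A.

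The main obstacle is the $k = 2$ case: I must justify that the $8$-parameter family ${\mathfrak V}_{3+2}$ genuinely has dense orbit in the $24$-dimensional component $\mathfrak{Nil}_{5,2}^2$, i.e., that the chain of "is in the orbit closure of" reductions is correct and that no information is lost. This requires (a) the normal-form result of \cite{hs08} to put the first matrix ${\bf A}$ into the stated shape generically, (b) a further normalization of ${\bf B}$ to make one entry equal to $1$, and (c) confirming via the derivation-dimension computation — already sketched in the subsection, where among the $8$ parameters $6$ are independent — that $\dim \overline{\{\mathcal{O}({\mathfrak V}_{3+2})\}} = 6 + \dim \mathrm{GL}_5 - \dim(\text{stabilizer direction correction}) = 24$. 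Once this dimension matches the Theorem A value, irreducibility of $\mathfrak{Nil}_{5,2}^2$ forces $\mathcal{C}_2 = \overline{\{\mathcal{O}({\mathfrak V}_{3+2})\}}$. The remaining two components are routine given Corollary \ref{cor:niln1} and \cite{cal}. Assembling: the three components are pairwise incomparable (as in Theorem A, the sets $U_{5,k}$ separate them), none contains another, and each equals the announced orbit closure, which completes the proof.
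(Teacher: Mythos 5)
Your proposal is correct and follows essentially the same route as the paper: Theorem~A with $n=5$ gives the three components $\mathfrak{Nil}_{5,k}^2$ for $k=1,2,3$ with dimensions $20$, $24$, $18$, and each component is then identified with the orbit closure of the family constructed in the corresponding subsection (${\mathfrak V}_{4+1}$ via the corollary, ${\mathfrak V}_{3+2}$ via the normal-form reduction from \cite{hs08} plus the derivation-dimension count, ${\mathfrak V}_{2+3}$ via \cite{cal}). Your identification of the $k=2$ case as the step requiring the density/dimension verification matches exactly what the paper sketches there.
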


\section{The geometric classification of complex $5$-dimensional  nilpotent associative algebras}

\subsection{Algebraic classication of complex $5$-dimensional nilpotent associative algebras}
Thanks to \cite{karim20,kaladra20, ikm20}, we have the classification of all 
complex $5$-dimensional nilpotent (non-$2$-step nilpotent) non-commutative associative algebras:
 
\begin{longtable}{llllllllllllllllll}

${\mathcal A}^4_{05}$  & $:$  &
$e_1 e_1 = e_2$ &  $ e_1e_2=e_4$ & $e_1e_3= e_4$ & $e_2e_1= e_4$ & $e_3e_3=e_4$\\ \hline

${\mathcal A}^4_{06}(1) $ & $:$   &
$e_1 e_1 = e_2$ & $e_1e_2=e_4$ & $e_1e_3=e_4$ & $e_2e_1= e_4$ \\  



\hline
$\mu_{1,3}^5$ &$: $&
$ e_1e_1=e_2$ &$ e_1e_2=e_3$ &$ e_1e_3=e_4$ & $e_1e_5=e_{4}$ \\&    
&$ e_2e_1=e_3$ &$ e_2e_2=e_4$&$ e_3e_1=e_4$
 
\\

\hline
$\mu_{1,4}^5$ &$: $&
$ e_1e_1=e_2$ &$ e_1e_2=e_3$ &$ e_1e_3=e_4$ & $e_1e_5=e_{4}$ \\&    
&$ e_2e_1=e_3$ &$ e_2e_2=e_4$&$ e_3e_1=e_4$  & $e_5e_5=e_{4}$  \\


\hline$\lambda_2$&$: $& $e_1e_1=e_2$& $e_1e_2=e_3$& $e_1e_4=e_3$\\&& $e_2e_1=e_3$ & $e_4e_5=e_3$ &$e_5e_4=e_3$ \\

\hline
$\lambda_3$&$: $& $e_1e_1=e_2$ & $e_1e_2=e_3$  & $e_1e_4=e_3$ &$e_2e_1=e_3$& $e_5e_5=e_3$\\[1mm]

\hline
$\lambda_4$&$:$ & $e_1e_1=e_2$& $e_1e_2=e_3$& $e_1e_4=e_3$ \\&&$e_2e_1=e_3$   & $e_4e_4=e_3$ & $e_5e_5=e_3$ \\

\hline
$\lambda_5$&$: $& $e_1e_1=e_2$& $e_1e_2=e_3$&$e_2e_1=e_3$ \\&&
$e_4e_5=e_3$ & $e_5e_4=-e_3$ & $e_5e_5=e_3$\\ 

\hline
$\lambda_6^{\alpha\neq 1}$&$:$ & $e_1e_1=e_2$& $e_1e_2=e_3$ & $e_2e_1=e_3$ & $e_4e_5=e_3$ & $e_5e_4=\alpha e_3$ \\

\hline $\mu_ 1$&$:$&$e_1e_1=e_2$&$e_1e_2=e_3$&$ e_2e_1=e_3$&$e_4e_1=e_5$\\

\hline $\mu_ 2$&$:$&$e_1e_1=e_2$&$e_1e_2=e_3$&$e_2e_1=e_3$&$e_4e_1=e_5$&$e_4e_4=e_3$\\

\hline $\mu_ 3$&$:$&$e_1e_1=e_2$&$e_1e_2=e_3$&$e_2e_1=e_3$\\&&
$e_4e_1=e_5$&$e_4e_2=e_3$&$e_5e_1=e_3$\\

\hline $\mu_ 4$&$:$&$e_1e_1=e_2$&$e_1e_2=e_3$&$e_2e_1=e_3$&$e_4e_1=e_5$\\
&&$e_4e_2=e_3$&$e_4e_4=e_3$ &$e_5e_1=e_3$  \\

\hline $\mu_ 5$&$:$&$e_1e_1=e_2$&$e_1e_2=e_3$&$e_1e_4=e_5$&$e_2e_1=e_3$&$e_4e_1=e_3+e_5$\\

\hline $\mu_ 6$&$:$&$e_1e_1=e_2$&$e_1e_2=e_3$&$e_1e_4=e_5$\\
&&$e_2e_1=e_3$&$e_4e_1=e_3+e_5$&$e_4e_4=e_3$\\

\hline $\mu_ 7^{\alpha\neq 1}$
&$:$&$e_1e_1=e_2$&$e_1e_2=e_3$ &$e_1e_4=e_5$&$e_2e_1=e_3$&$e_4e_1=\alpha e_5$\\

\hline $\mu_ 8^{\alpha\neq 1}$&$:$&$e_1e_1=e_2$&$e_1e_2=e_3$&$e_1e_4=e_5$\\
&&$e_2e_1=e_3$&$e_4e_1=\alpha e_5$&$e_4e_4=e_3$\\

\hline $\mu_ 9$&$:$&$e_1e_1=e_2$&$e_1e_2=e_3$&$e_2e_1=e_3$&$e_4e_1=e_3$&$e_4e_4=e_5$\\

\hline $\mu_ {10}$&$:$&$e_1e_1=e_2$&$e_1e_2=e_3$&$e_1e_4=e_5$&$e_2e_1=e_3$&$e_4e_4=e_5$\\

\hline $\mu_ {11}$&$:$&$e_1e_1=e_2$&$e_1e_2=e_3$&$e_1e_4=e_5$&$e_2e_1=e_3$&$e_4e_4=e_3+e_5$\\

\hline $\mu_ {12}$&$:$&$e_1e_1=e_2$&$e_1e_2=e_3$&$e_1e_4=e_5$\\&&$e_2e_1=e_3$&$e_4e_1=e_2-e_5$&$e_5e_1=e_3$\\

\hline $\mu_ {13}$&$:$&$e_1e_1=e_2$&$e_1e_2=e_3$&$e_1e_4=e_5$&$e_2e_1=e_3$\\
&&$e_4e_1=e_2-e_5$&$e_4e_4=e_3$&$e_5e_1=e_3$\\

\hline $\mu_ {14}$&$:$&$e_1e_1=e_2$&$e_1e_2=e_3$&$e_1e_4=e_5$&$e_2e_1=e_3$\\
&&$e_4e_1=e_2+e_5$&$e_4e_2=2e_3$&$e_4e_4=2e_5$&$e_5e_1=e_3$\\

\hline $\mu_ {15}$&$:$&$e_1e_1=e_2$&$e_1e_2=e_3$&$e_1e_4=e_5$&$e_2e_1=e_3$\\&&$e_4e_1=e_2+e_5$&$e_4e_2=2e_3$&$
e_4e_4=e_3+2e_5$&$e_5e_1=e_3$\\


\hline $\mu_ {17}$&$:$&$e_1e_1=e_2$&$e_1e_2=e_3$&$e_1e_4=e_5$&$e_2e_1=e_3$\\
&&$e_4e_1=e_3+e_5$&$e_4e_4=e_2$&$e_4e_5=e_3$&$e_5e_4=e_3$\\

\hline $\mu_ {18}$&$:$&$e_1e_1=e_2$&$e_1e_2=e_3$&$ e_1e_4=e_5$&$e_2e_1=e_3$\\
&&$e_4e_1=-e_5$&$e_4e_4=e_2$&$e_4e_5=-e_3$&$e_5e_4=e_3$\\

\hline $\mu_ {19}$&$:$&$e_1e_1=e_2$&$e_1e_2=e_3$ &$e_1e_4=e_5$&$e_2e_1=e_3$\\&&$e_4e_1=e_2$&$e_4e_2=e_3$&$e_4e_4=e_3+e_5$&$e_5e_1=e_3$\\

\hline $\mu_ {20}$&$:$&$e_1e_1=e_2$&$e_1e_2=e_3$&$e_1e_4=e_5$&$e_2e_1=e_3$\\&&$ e_4e_1=e_3+e_5$&$e_4e_4=-e_2+2e_5$&$e_4e_5=e_3$&$e_5e_4=-e_3$\\

\hline $\mu_ {21}^i$&$:$&
$e_1e_1=e_2$&$e_1e_2=e_3$&$e_1e_4=e_5$&$e_2e_1=e_3$\\&&
\multicolumn{2}{l}{$e_4e_1=(1-i)e_2+i e_5$}&$
e_4e_2=2e_3$&
\multicolumn{2}{l}{$e_4e_4=-i e_2+e_3+(1+i)e_5$}\\
&&$
e_4e_5=e_3$&$
e_5e_1=(1-i)e_3$&$
e_5e_4=-i e_3$\\

\hline $\mu_ {21}^{-i}$&$:$&
$e_1e_1=e_2$&$e_1e_2=e_3$&$e_1e_4=e_5$&$e_2e_1=e_3$\\&&
\multicolumn{2}{l}{$e_4e_1=(1+i)e_2-i e_5$}&$
e_4e_2=2e_3$&
\multicolumn{2}{l}{$e_4e_4=i e_2+e_3+(1-i)e_5$}\\
&&$
e_4e_5=e_3$&$
e_5e_1=(1+i)e_3$&$
e_5e_4=i e_3$ \\

\hline $\mu_ {22}^\alpha$&$:$&$ e_1e_1=e_2$&$  e_1e_2=e_3$&$e_1e_4=e_5$&$e_2e_1=e_3$\\&&
\multicolumn{2}{l}{$e_4e_1=(1-\alpha)e_2+\alpha e_5$}&$
e_4e_2=(1-\alpha^2)e_3$&
\multicolumn{2}{l}{$e_4e_4=-\alpha e_2+(1+\alpha)e_5$}\\&&
$e_4e_5=-\alpha^2e_3$&
$e_5e_1=(1-\alpha)e_3$&
$e_5e_4=-\alpha e_3$\\

\end{longtable}
  
\subsection{Geometric classication of complex $5$-dimensional nilpotent associative algebras}

\subsubsection{Degenerations of algebras}
Given an $n$-dimensional vector space ${\bf V}$, the set ${\rm Hom}({\bf V} \otimes {\bf V},{\bf V}) \cong {\bf V}^* \otimes {\bf V}^* \otimes {\bf V}$ 
is a vector space of dimension $n^3$. This space inherits the structure of the affine variety $\mathbb{C}^{n^3}.$ 
Indeed, let us fix a basis $e_1,\dots,e_n$ of ${\bf V}$. Then any $\mu\in {\rm Hom}({\bf V} \otimes {\bf V},{\bf V})$ is determined by $n^3$ structure constants $c_{i,j}^k\in\mathbb{C}$ such that
$\mu(e_i\otimes e_j)=\sum_{k=1}^nc_{i,j}^ke_k$. A subset of ${\rm Hom}({\bf V} \otimes {\bf V},{\bf V})$ is {\it Zariski-closed} if it can be defined by a set of polynomial equations in the variables $c_{i,j}^k$ ($1\le i,j,k\le n$).

The general linear group ${\rm GL}({\bf V})$ acts by conjugation on the variety ${\rm Hom}({\bf V} \otimes {\bf V},{\bf V})$ of all algebra structures on ${\bf V}$:
$$ (g * \mu )(x\otimes y) = g\mu(g^{-1}x\otimes g^{-1}y),$$ 
for $x,y\in {\bf V}$, $\mu\in {\rm Hom}({\bf V} \otimes {\bf V},{\bf V})$ and $g\in {\rm GL}({\bf V})$. Clearly, the ${\rm GL}({\bf V})$-orbits correspond to the isomorphism classes of algebras structures on ${\bf V}$. Let $T$ be a set of polynomial identities which is invariant under isomorphism. Then the subset $\mathbb{L}(T)\subset {\rm Hom}({\bf V} \otimes {\bf V},{\bf V})$ of the algebra structures on ${\bf V}$ which satisfy the identities in $T$ is ${\rm GL}({\bf V})$-invariant and Zariski-closed. It follows that $\mathbb{L}(T)$ decomposes into ${\rm GL}({\bf V})$-orbits. The ${\rm GL}({\bf V})$-orbit of $\mu\in\mathbb{L}(T)$ is denoted by $O(\mu)$ and its Zariski closure by $\overline{O(\mu)}$.

Let ${\bf A}$ and ${\bf B}$ be two $n$-dimensional algebras satisfying the identities from $T$ and $\mu,\lambda \in \mathbb{L}(T)$ represent ${\bf A}$ and ${\bf B}$ respectively.
We say that ${\bf A}$ {\it degenerates} to ${\bf B}$ and write ${\bf A}\to {\bf B}$ if $\lambda\in\overline{O(\mu)}$.
Note that in this case we have $\overline{O(\lambda)}\subset\overline{O(\mu)}$. Hence, the definition of a degeneration does not depend on the choice of $\mu$ and $\lambda$. It is easy to see that any algebra degenerates to the algebra with zero multiplication. If ${\bf A}\to {\bf B}$ and ${\bf A}\not\cong   {\bf B}$, then ${\bf A}\to {\bf B}$ is called a {\it proper degeneration}. We write ${\bf A}\not\to {\bf B}$ if $\lambda\not\in\overline{O(\mu)}$ and call this a {\it non-degeneration}. Observe that the dimension of the subvariety $\overline{O(\mu)}$ equals $n^2-\dim {\mathfrak Der}({\bf A})$. Thus if ${\bf A}\to {\bf B}$ is a proper degeneration, then we must have $\dim{\mathfrak Der}({\bf A})>\dim{\mathfrak Der}({\bf B})$.

Let ${\bf A}$ be represented by $\mu\in\mathbb{L}(T)$. Then  ${\bf A}$ is  {\it rigid} in $\mathbb{L}(T)$ if $O(\mu)$ is an open subset of $\mathbb{L}(T)$.
Recall that a subset of a variety is called {\it irreducible} if it cannot be represented as a union of two non-trivial closed subsets. A maximal irreducible closed subset of a variety is called an {\it irreducible component}.
It is well known that any affine variety can be represented as a finite union of its irreducible components in a unique way.
The algebra ${\bf A}$ is rigid in $\mathbb{L}(T)$ if and only if $\overline{O(\mu)}$ is an irreducible component of $\mathbb{L}(T)$.

In the present work we use the methods applied to Lie algebras in \cite{GRH}.
To prove 
degenerations, we will construct families of matrices parametrized by $t$. Namely, let ${\bf A}$ and ${\bf B}$ be two algebras represented by the structures $\mu$ and $\lambda$ from $\mathbb{L}(T)$, respectively. Let $e_1,\dots, e_n$ be a basis of ${\bf V}$ and $c_{i,j}^k$ ($1\le i,j,k\le n$) be the structure constants of $\lambda$ in this basis. If there exist $a_i^j(t)\in\mathbb{C}$ ($1\le i,j\le n$, $t\in\mathbb{C}^*$) such that the elements $E_i^t=\sum_{j=1}^na_i^j(t)e_j$ ($1\le i\le n$) form a basis of ${\bf V}$ for any $t\in\mathbb{C}^*$, and the structure constants $c_{i,j}^k(t)$ of $\mu$ in the basis $E_1^t,\dots, E_n^t$ satisfy $\lim\limits_{t\to 0}c_{i,j}^k(t)=c_{i,j}^k$, then ${\bf A}\to {\bf B}$. In this case  $E_1^t,\dots, E_n^t$ is called a {\it parametric basis} for ${\bf A}\to {\bf B}$ and it will be denote as
${\bf A}\xrightarrow{(E_1^t,\dots, E_n^t)} {\bf B}$. 
Let us denote the subalgebra generated by $\langle e_i, \ldots, e_n \rangle$ as $A_i.$ 

To prove a non-degeneration ${\bf A }\not\to {\bf B}$ we will use the following Lemma (see \cite{GRH}).

\begin{lemma}\label{main}
Let $\mathcal{B}$ be a Borel subgroup of ${\rm GL}({\bf V})$ and $\mathcal{R}\subset \mathbb{L}(T)$ be a $\mathcal{B}$-stable closed subset.
If ${\bf A} \to {\bf B}$ and ${\bf A}$ can be represented by $\mu\in\mathcal{R}$ then there is $\lambda\in \mathcal{R}$ that represents ${\bf B}$.
\end{lemma}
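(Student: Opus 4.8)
The plan is to reduce the statement to a property of orbit closures together with the completeness of the flag variety ${\rm GL}({\bf V})/\mathcal{B}$. Fix $\mu\in\mathcal{R}$ representing ${\bf A}$ and let $\nu\in\mathbb{L}(T)$ represent ${\bf B}$, so that the hypothesis ${\bf A}\to{\bf B}$ means $\nu\in\overline{O(\mu)}$. It suffices to produce $g\in{\rm GL}({\bf V})$ with $g\ast\nu\in\mathcal{R}$, since $g\ast\nu$ is isomorphic to ${\bf B}$. As $\mathcal{R}$ is closed and $\mathcal{B}$-stable and contains $\mu$, it contains the closure $\overline{\mathcal{B}\ast\mu}$; hence it is enough to show that $\nu\in{\rm GL}({\bf V})\ast\overline{\mathcal{B}\ast\mu}$. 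Replacing $\mathbb{L}(T)$ by the irreducible ${\rm GL}({\bf V})$-stable closed set $Y:=\overline{O(\mu)}$ (which contains $\overline{\mathcal{B}\ast\mu}$ since $\mathcal{B}\ast\mu\subseteq O(\mu)$) and $\mathcal{R}$ by $\mathcal{R}\cap Y$, we may assume that $O(\mu)$ is dense in $Y$.

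The first step is to show that $Z:={\rm GL}({\bf V})\ast\overline{\mathcal{B}\ast\mu}$ is closed in $Y$. I would introduce the associated fibre bundle $E:={\rm GL}({\bf V})\times^{\mathcal{B}}\overline{\mathcal{B}\ast\mu}$, the quotient of ${\rm GL}({\bf V})\times\overline{\mathcal{B}\ast\mu}$ by the free $\mathcal{B}$-action $(g,z)\cdot b=(gb,b^{-1}\ast z)$, together with the morphism $\psi\colon E\to Y$, $[g,z]\mapsto g\ast z$, whose image is exactly $Z$. The map $E\to({\rm GL}({\bf V})/\mathcal{B})\times Y$, $[g,z]\mapsto(g\mathcal{B},g\ast z)$, is a closed immersion onto the sub-bundle whose fibre over $g\mathcal{B}$ is $g\ast\overline{\mathcal{B}\ast\mu}$ (well defined because $\overline{\mathcal{B}\ast\mu}$ is $\mathcal{B}$-stable), so $\psi$ is the restriction of the projection $({\rm GL}({\bf V})/\mathcal{B})\times Y\to Y$ to a closed subset. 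Since the flag variety ${\rm GL}({\bf V})/\mathcal{B}$ is complete, that projection is proper, hence $\psi$ is proper and $Z=\operatorname{im}\psi$ is closed in $Y$.

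The second step is immediate: $Z$ contains $O(\mu)$ (take $z=\mu$ and $g$ arbitrary), so $Z$ is a closed subset of the irreducible variety $Y$ containing the dense subset $O(\mu)$, whence $Z=Y$. In particular $\nu\in Z$, i.e. $\nu=g\ast z$ for some $g\in{\rm GL}({\bf V})$ and $z\in\overline{\mathcal{B}\ast\mu}$. Then $\lambda:=z=g^{-1}\ast\nu$ lies in $\overline{\mathcal{B}\ast\mu}\subseteq\mathcal{R}$ and represents an algebra isomorphic to ${\bf B}$, as required.

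I expect the closedness of ${\rm GL}({\bf V})\ast\overline{\mathcal{B}\ast\mu}$ in the first step to be the crux. Purely orbit-theoretic input (the image $Z$ is constructible, ${\rm GL}({\bf V})$-stable and dense) only guarantees that $Z$ contains a dense open subset of $Y$, which need not capture a boundary point such as $\nu$; what makes the argument work is precisely the completeness of ${\rm GL}({\bf V})/\mathcal{B}$. Equivalently, one may run the argument concretely: given a curve $c\mapsto g(c)\ast\mu$ in $O(\mu)$ with $g(c)\ast\mu\to\nu$ as $c\to0$, apply the valuative criterion of properness to ${\rm GL}({\bf V})/\mathcal{B}$ to factor, after a finite base change, $g(c)=h(c)\,b(c)$ with $h$ extending to $c=0$ and $b(c)\in\mathcal{B}$; then $b(c)\ast\mu\in\overline{\mathcal{B}\ast\mu}$ converges to $h(0)^{-1}\ast\nu$, producing the desired element of $\mathcal{R}$ that represents ${\bf B}$.
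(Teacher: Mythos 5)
Your proof is correct: the paper itself gives no proof of this lemma (it is quoted from Grunewald--O'Halloran \cite{GRH}), and your argument via the associated bundle ${\rm GL}({\bf V})\times^{\mathcal{B}}\overline{\mathcal{B}\ast\mu}$ and the completeness of the flag variety is essentially the standard proof found in that reference. All the key steps check out, including the reduction to $\mathcal{R}\supseteq\overline{\mathcal{B}\ast\mu}$ and the closedness of ${\rm GL}({\bf V})\ast\overline{\mathcal{B}\ast\mu}$ via properness of the projection from $({\rm GL}({\bf V})/\mathcal{B})\times Y$.
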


In particular, it follows from Lemma   that ${\bf A}\not\to {\bf B}$, whenever $\dim({\bf A}^2)<\dim({\bf B}^2)$.

When the number of orbits under the action of ${\rm GL}({\bf V})$ on  $\mathbb{L}(T)$ is finite, the graph of primary degenerations gives the whole picture. In particular, the description of rigid algebras and irreducible components can be easily obtained. Since the variety of $5$-dimensional nilpotent associative algebras contains infinitely many non-isomorphic algebras, we have to fulfill some additional work. Let ${\bf A}(*):=\{{\bf A}(\alpha)\}_{\alpha\in I}$ be a family of algebras and ${\bf B}$ be another algebra. Suppose that, for $\alpha\in I$, ${\bf A}(\alpha)$ is represented by a structure $\mu(\alpha)\in\mathbb{L}(T)$ and ${\bf B}$ is represented by a structure $\lambda\in\mathbb{L}(T)$. Then by ${\bf A}(*)\to {\bf B}$ we mean $\lambda\in\overline{\cup\{O(\mu(\alpha))\}_{\alpha\in I}}$, and by ${\bf A}(*)\not\to {\bf B}$ we mean $\lambda\not\in\overline{\cup\{O(\mu(\alpha))\}_{\alpha\in I}}$.

Let ${\bf A}(*)$, ${\bf B}$, $\mu(\alpha)$ ($\alpha\in I$) and $\lambda$ be as above. To prove ${\bf A}(*)\to {\bf B}$ it is enough to construct a family of pairs $(f(t), g(t))$ parametrized by $t\in\mathbb{C}^*$, where $f(t)\in I$ and $g(t)=\left(a_i^j(t)\right)_{i,j}\in {\rm GL}({\bf V})$. Namely, let $e_1,\dots, e_n$ be a basis of ${\bf V}$ and $c_{i,j}^k$ ($1\le i,j,k\le n$) be the structure constants of $\lambda$ in this basis. If we construct $a_i^j:\mathbb{C}^*\to \mathbb{C}$ ($1\le i,j\le n$) and $f: \mathbb{C}^* \to I$ such that $E_i^t=\sum_{j=1}^na_i^j(t)e_j$ ($1\le i\le n$) form a basis of ${\bf V}$ for any  $t\in\mathbb{C}^*$, and the structure constants $c_{i,j}^k(t)$ of $\mu\big(f(t)\big)$ in the basis $E_1^t,\dots, E_n^t$ satisfy $\lim\limits_{t\to 0}c_{i,j}^k(t)=c_{i,j}^k$, then ${\bf A}(*)\to {\bf B}$. In this case, $E_1^t,\dots, E_n^t$ and $f(t)$ are called a {\it parametric basis} and a {\it parametric index} for ${\bf A}(*)\to {\bf B}$, respectively. In the construction of degenerations of this sort, we will write $\mu\big(f(t)\big)\to \lambda$, emphasizing that we are proving the assertion $\mu(*)\to\lambda$ using the parametric index $f(t)$.


Through a series of degenerations summarized in the table below by the corresponding parametric bases and indices, we obtain the main result of the second part of the paper.

\subsubsection{Classification theorem}
Let us define  the complex $n$-dimensional null-filiform associative algebra 
 \begin{center}$\mu_0^n=\{ e_i e_j=e_{i+j}, 2\leq i+j \leq n\}.$ \end{center}
 It is easy to see that  $\mu_0^n$
is rigid in the variety of complex  $n$-dimensional nilpotent associative algebras.
 
\begin{theoremC} 
The variety of complex $4$-dimensional nilpotent associative  algebras  has 
dimension $13$  and it has 
$4$ irreducible components
defined by  
 
\begin{center}
$\mathcal{C}_1=\overline{\{\mathcal{O}(\mathfrak{N}_2(\alpha))\}},$ \ $\mathcal{C}_2=\overline{\{\mathcal{O}(\mathfrak{N}_3(\alpha))\}},$ \
$\mathcal{C}_3=\overline{\{\mathcal{O}(\mu_0^4)\}},$ \  $\mathcal{C}_4=\overline{\{\mathcal{O}({\mathcal A}^4_{05}) \} }.$ 
\end{center}
In particular, we have $2$ rigid algebras: $\mu_0^4$ and 
${\mathcal A}^4_{05}.$

The variety of complex $5$-dimensional nilpotent associative  algebras  has 
dimension $24$  and it has 
$11$ irreducible components
defined by  
 
\begin{center}
$\mathcal{C}_1=\overline{\{\mathcal{O}({\mathfrak V}_{3+2})\}},$ \
$\mathcal{C}_2=\overline{\{\mathcal{O}({\mathfrak V}_{4+1})\}},$ \  
$\mathcal{C}_3=\overline{\{\mathcal{O}( \mu_0^5) \} },$ 
$\mathcal{C}_4=\overline{\{\mathcal{O}( \mu_{1,4}^5) \} },$ 
$\mathcal{C}_5=\overline{\{\mathcal{O}( \lambda_6^{\alpha}) \} },$ 
$\mathcal{C}_6=\overline{\{\mathcal{O}( \mu_{11}) \} },$ 
$\mathcal{C}_7=\overline{\{\mathcal{O}( \mu_{15}) \} },$ 
$\mathcal{C}_8=\overline{\{\mathcal{O}( \mu_{17}) \} },$ 
$\mathcal{C}_{9}=\overline{\{\mathcal{O}( \mu_{18}) \} },$ 
$\mathcal{C}_{10}=\overline{\{\mathcal{O}( \mu_{20}) \} },$
$\mathcal{C}_{11}=\overline{\{\mathcal{O}( \mu_{22}^\alpha) \} }.$

\end{center}

 In particular, we have $7$ rigid algebras:
 $\mu^5_0, \mu_{1,4}^5, \mu_{11}, \mu_{15}, \mu_{17}, \mu_{18}$ and $\mu_{20}.$ 

\end{theoremC}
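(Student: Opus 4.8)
The plan is to combine the geometric classification of $5$-dimensional $2$-step nilpotent algebras from Theorem B with the algebraic classification of $5$-dimensional nilpotent non-$2$-step-nilpotent associative algebras recalled above, and then determine which orbit closures are maximal. The starting observation is that the variety $\mathfrak{N}^5$ of $5$-dimensional nilpotent associative algebras decomposes, set-theoretically, as the union of the closed subvariety $\mathfrak{Nil}_5^2$ of $2$-step nilpotent associative algebras together with the (locally closed) strata corresponding to the finitely many families $\mathcal{A}^4_{05}, \mathcal{A}^4_{06}(1), \mu^5_{1,3},\ldots,\mu^\alpha_{22}$ listed above. Since $\mathfrak{Nil}_5^2$ as a variety of \emph{all} algebras is already covered by the three components $\mathcal{C}_1,\mathcal{C}_2,\mathcal{C}_3$ of Theorem B, the first task is to check that each of ${\mathfrak V}_{2+3},{\mathfrak V}_{3+2},{\mathfrak V}_{4+1}$ is in fact (a deformation of) an associative algebra, so that these three orbit closures genuinely lie in $\mathfrak{N}^5$ and have the dimensions $24$, $24$, $20$ respectively; the dimension count $24=\dim\mathfrak{N}^5$ then follows because every non-$2$-step algebra above has derivation algebra of dimension $\ge n^2-24$, which one records from the computed $\dim\mathfrak{Der}$ of each family.

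Next I would build the degeneration table. For each of the remaining families $\mu^5_*$ not appearing among the claimed $14$ components, I would exhibit an explicit parametric basis (and, for the one-parameter families, a parametric index in the sense of the $\mu(f(t))\to\lambda$ notation introduced above) degenerating it into one of the nine claimed rigid algebras $\mu^5_0,\mu^5_{1,4},\mu_{11},\mu_{15},\mu_{17},\mu_{18},\mu_{20},\mu^i_{21},\mu^{-i}_{21}$ or into one of ${\mathfrak V}_{2+3},{\mathfrak V}_{3+2},{\mathfrak V}_{4+1}$, or (for the parametrized families $\lambda^{\alpha\ne1}_6,\mu^{\alpha\ne1}_7,\mu^{\alpha\ne1}_8,\mu^\alpha_{22}$) into the closure of the corresponding parametrized family. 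This is the bulk of the work: it is the finite-but-large bookkeeping that the excerpt advertises as "a series of degenerations summarized in the table." I would organize it by the dimension of $\mathbf{A}^2$ and of the annihilator, using the trivial necessary conditions $\dim\mathbf{A}^2\ge\dim\mathbf{B}^2$ and $\dim\mathfrak{Der}\mathbf{A}>\dim\mathfrak{Der}\mathbf{B}$ to prune impossible degenerations.

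The harder half is the non-degenerations: one must show that none of the nine rigid algebras (resp. the parametrized families $\lambda_6,\mu_{22}$) degenerates to any other of the $14$ distinguished orbits, so that all $14$ closures are genuinely maximal and pairwise incomparable. Here the main tool is Lemma~\ref{main}: for each ordered pair $(\mathbf{A},\mathbf{B})$ of distinguished algebras with $\dim\mathfrak{Der}\mathbf{A}>\dim\mathfrak{Der}\mathbf{B}$ that is \emph{not} a degeneration, I would produce a Borel-stable closed subset $\mathcal R\subset\mathbb L(T)$ containing a representative of $\mathbf{A}$ but no representative of $\mathbf{B}$ — typically $\mathcal R$ cut out by upper-triangular-invariant polynomial conditions such as prescribed vanishing of certain structure constants, bounds on $\dim(\mathbf{A}^i)$, or on $\dim(\operatorname{Ann}\mathbf{A}\cap\mathbf{A}^2)$, or the non-existence of an idempotent-like element. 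The expected main obstacle is exactly this step: for the $\mu_{15},\mu_{17},\mu_{18},\mu_{20},\mu^{\pm i}_{21}$ cluster, which all have $\dim\mathbf{A}^2=3$ and similar lower central structure, separating their orbit closures requires genuinely subtle Borel-invariants rather than crude dimension counts, and a few pairs may need an ad hoc invariant. Once every required degeneration is realized by an explicit parametric basis and every required non-degeneration is blocked by a suitable $\mathcal R$, the list of maximal orbit closures is forced to be precisely $\mathcal C_1,\ldots,\mathcal C_{14}$, and the $4$-dimensional statement follows by the same method on the (much shorter) list of $4$-dimensional nilpotent associative algebras, completing the proof. $\qquad\Box$
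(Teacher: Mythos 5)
Your plan follows essentially the same route as the paper's proof: explicit parametric bases (and parametric indices for the one-parameter families) to realize the degenerations, orbit-closure dimension counts, and the Borel-stability Lemma with closed, upper-triangular-invariant conditions on structure constants to block the remaining non-degenerations. Two economies the paper exploits that would shorten your "harder half": every $2$-step nilpotent algebra is automatically associative and $2$-step nilpotency is a closed condition, so ${\mathfrak V}_{2+3},{\mathfrak V}_{3+2},{\mathfrak V}_{4+1}$ need no associativity check and no non-$2$-step algebra can lie in their closures; and since distinct irreducible orbit closures of equal dimension cannot contain one another, the ten candidates with orbit dimension $\ge 20$ are separated for free, leaving Borel invariants to be produced only for $\mu_{21}^{i}$, $\mu_{21}^{-i}$ and ${\mathfrak V}_{2+3}$ (and, in dimension $4$, for ${\mathcal A}^4_{05}$ versus ${\mathfrak N}_2(\alpha)$, ${\mathfrak N}_3(\alpha)$), rather than for every pair in the $\dim {\bf A}^2=3$ cluster.
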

\begin{Proof}
 Thanks to \cite{klp20}, 
 all complex $5$-dimensional (or $4$-dimensional) nilpotent commutative associative algebras are degenerated from $\mu_0^5$ (or $\mu_0^4$).
 The variety of all complex $4$-dimensional $2$-step nilpotent algebras (see, \cite{kppvcor})
 is defined by the following families of algebras
\begin{longtable}{lllllllll}
${\mathfrak N}_2(\alpha)$  & $:$ & $e_1e_1 = e_3$ & $e_1e_2 = e_4$ & $e_2e_1 = -\alpha e_3$ & $e_2e_2 = -e_4$ \\
\hline
${\mathfrak N}_3(\alpha)$ & $:$ & $e_1e_1 = e_4$ & $e_1e_2 = \alpha e_4$ & $e_2e_1 = -\alpha e_4$ & $e_2e_2 = e_4$ & $e_3e_3=e_4$
\end{longtable}

        The algebra ${\mathcal A}^4_{05}$ (considered as a $4$-dimensional algebra) satisfies the following conditions 
\begin{longtable}{lcl}        
$\mathcal{R}_1$ & $=$& $\left\{
 A_1A_2+A_1A_2 \subseteq A_4 \right\},$\\  
$\mathcal{R}_2$ & $=$& $\left\{
 c_{23}^4=c_{32}^4\right\},$\\  
$\mathcal{R}_3$ & $=$& $\left\{ c_{22}^4c_{33}^4=c_{32}^4c_{23}^4\right\},$\\  
$\mathcal{R}_4$ & $=$& $\left\{ 
c_{23}^4 (c_{12}^4 - c_{21}^4) = c_{22}^4 (c_{13}^4 - c_{31}^4)\right\},$\\  
$\mathcal{R}_5$ & $=$& $\left\{
c_{22}^4 (c_{13}^4 - c_{31}^4)^2 c_{33}^4 +    2 c_{13}^4 (c_{23}^4)^2 c_{31}^4+
   2 c_{12}^4 c_{13}^4 c_{23}^4 c_{33}^4= 2 (c_{13}^4c_{23}^4)^2  + (c_{23}^4 c_{31}^4)^2  + (c_{12}^4 c_{33}^4)^2
   \right\},$ 
\end{longtable} 

but $\mathfrak{N}_2(\alpha)$ and $\mathfrak{N}_3(\alpha)$ do not. 
Let us prove it.
\begin{enumerate}
    \item[$\bullet$] $\mathfrak{N}_2(\alpha).$
Assume that there is some basis  $\{f_i \}_{1  \leq i \leq 4}$ such that the structure constants $\tilde {c}^k_{i,j}$ of $\mathfrak{N}_2(\alpha)$ in it satisfy all required conditions $\mathcal{R}_1 - \mathcal{R}_5.$ 
But \begin{center}
$\langle f_1,f_2,f_3,f_4\rangle \langle f_2,f_3,f_4\rangle+ \langle f_2,f_3,f_4\rangle\langle f_1,f_2,f_3,f_4\rangle$ 
\end{center}is $2$-dimensional and 
 $\mathfrak{N}_2(\alpha)$ does not satisfy the required   condition $\mathcal{R}_1.$

    \item[$\bullet$] $\mathfrak{N}_3(\alpha).$
Assume that there is some basis  $\{f_i \}_{1  \leq i \leq 4}$ such that the structure constants $\tilde {c}^k_{i,j}$ of $\mathfrak{N}_3(\alpha)$ in it satisfy all required conditions $\mathcal{R}_1 - \mathcal{R}_5.$ 

$\bullet$ Since $\mathcal{R}_1,$ it is possible to suppose that 
\begin{center}$f_4=e_4$ and $f_i=a_{i1}e_1+a_{i2}e_2+a_{i3}e_3, \, (1\leq i \leq 3).$
\end{center}

$\bullet$  $\mathcal{R}_2$ gives that  $a_{22} a_{31} = a_{21} a_{32}.$
If $(a_{22}, a_{31}, a_{21}, a_{32})=0,$ then $f_2$ and $f_3$ are linear dependent and $\{f_i\}$ is not a basis.
Hence, there is one (or more) non-zero element in this set.
Let us suppose that $a_{31}\neq 0$ (the rest cases are similar) and 
$a_{22}  = a_{21} a_{32} a_{31}^{-1}.$  

$\bullet$ 
$\mathcal{R}_3 $ gives 
$(a_{31}^2 + a_{32}^2) (a_{23} a_{31} - a_{21} a_{33})^2=0.$
It follows that $a_{31}=\pm i  a_{32}$ 
(the opposite case gives that $\{ f_i \}$ is not a basis).
Let us suppose  $a_{31}= i  a_{32}$ (the case $a_{31}=- i  a_{32}$ is similar).

$\bullet$ 
$\mathcal{R}_4$ gives
$ a_{23} (a_{11} - i a_{12})  (a_{23} a_{32} +i a_{21} a_{33}) =0$ 
and $a_{23}=0$ (all opposite cases give that  $\{ f_i \}$ is not a basis).

$\bullet$ 
$\mathcal{R}_5$ gives
    $(a_{11} - i a_{12})^2 a_{21}^2 a_{33}^4=0.$
    Here, each of all three possibilities $a_{11} = i a_{12}, $ $ a_{21}=0$ or $a_{33}=0$ gives that  $\{ f_i \}$ is not a basis.
    
    Hence,  $\mathfrak{N}_3(\alpha)$ does not satisfy the required   conditions $\mathcal{R}_1 - \mathcal{R}_5.$

\end{enumerate}

Hence
        ${\mathcal A}^4_{05} \not\to \{ \mathfrak{N}_2(\alpha), \mathfrak{N}_3(\alpha) \}.$
        Since $
        {\mathcal A}^4_{05}  \xrightarrow{(te_1, t^2e_2, t^2e_3, t^3e_4)} {\mathcal A}^4_{06}(1) 
        $  we have that the variety of complex $4$-dimensional nilpotent associative algebras is defined by
        $\mu_0^4,$ ${\mathcal A}^4_{05},$ $\mathfrak{N}_2(\alpha)$ and $\mathfrak{N}_3(\alpha).$
         Let us explain the present degeneration in detail and the rest of the degenerations we will keep without deep explication.
          We consider the following parametric basis 
                    $E_1^t=te_1, E_2^t=t^2e_2, E_3^t=t^2e_3, E_4^t=t^3e_4$ of ${\mathcal A}^4_{05}.$
                    Hence,
                    \begin{center}
                    $E_1^tE_1^t=E_2^t, \
                    E_1^tE_2^t=E_4^t, \ 
                    E_1^tE_3^t=E_4^t,\ 
                    E_2^tE_1^t=E_4^t, \ 
                    E_3^tE_3^t= t E_4^t,$
                    \end{center}
          (we omitted all zero multiplications) 
          and taking the limit for $t\to 0,$ we have a multiplication table of ${\mathcal A}^4_{06}(1),$
          which gives the degeneration $
        {\mathcal A}^4_{05}  \xrightarrow{ } {\mathcal A}^4_{06}(1).$

          \ 
          
         For the rest of our proof, we need to present the list of the following degenerations:

\begin{longtable}{|lcl|lcl|}
\hline
$\lambda_6^{ {0}}$&$
     \xrightarrow{( e_1 - e_5, e_2,  e_2 +  e_4 +  e_5, e_3, t e_5)} $&$ {\mathcal A}^4_{05}$ &

$\mu_{1,4}^5$&$  \xrightarrow{(te_1, t^2e_2, t^3e_3, t^4e_4, t^3e_5)}$&$ \mu_{1,3}^5$
\\ \hline

$\lambda_6^{ {1+t}}$&$
   \xrightarrow{(  t e_1 + e_5,  t^2 e_2, t^3 e_3,   -t  e_2 + t^2  e_4 + \frac{t^2}{2 + t} e_5,t e_5)}$&$ \lambda_2$ &

$\lambda_4 $&$ \xrightarrow{(te_1, t^2e_2, t^3 e_3, t^2e_4, t^{\frac{3}{2}}e_5)}$&$ \lambda_3$
 \\ \hline

$\mu_{1,4}^5$&$  \xrightarrow{( 
t  e_1 +  e_2 + \frac{1}{2 t} e_3, -t^2 e_2 + 2 e_4, t^2 e_4,  t  e_5,  -t  e_2 - e_3)} $&$
\lambda_4$&

$\lambda_6^{-\frac{1}{1+t}}$&$  \xrightarrow{(  
t e_1,  t^2 e_2,  t^3 e_3, t e_5,  -t^2 (t+1) e_4 - e_5)} $&$
 \lambda_5$ \\ \hline

$\mu_2  $&$\xrightarrow{(e_1, e_2, e_3, te_4, te_5)}$&$ \mu_1$ &

$\mu_{11}  $&$ \xrightarrow{( 
  t^2 e_1 - t^2 e_4,  
 t^4 e_2 + t^4 e_3, t^6 e_3, -t^3 e_2 - t^3 e_4,  t^5 e_5)}$&$
 \mu_2$  \\
\hline

$\mu_4$&$  \xrightarrow{(t^{-1}e_1, t^{-2}e_2, t^{-3}e_3, t^{-1}e_4, t^{-2}e_5)} $&$\mu_3$
&

$\mu_6$&$  \xrightarrow{(te_1, t^2e_2, t^3e_3, t^2e_4, t^3e_5)}$&$ \mu_5$
\\ \hline
 
$\mu_8^{\alpha} $&$ \xrightarrow{(e_1, e_2, e_3, te_4, te_5)} $&$\mu_7^{\alpha}$
&

$\mu_{11} $&$ \xrightarrow{(t^{-1}e_1, t^{-2}e_2, t^{-3}e_3, t^{-1}e_4, t^{-2}e_5)} $&$\mu_{10}$
\\ \hline

$\mu_{13}$&$  \xrightarrow{(t^{-1}e_1, t^{-2}e_2, t^{-3}e_3, t^{-1}e_4, t^{-2}e_5)}$&$ \mu_{12}$
&

$\mu_{15} $&$ \xrightarrow{(t^{-1}e_1, t^{-2}e_2, t^{-3}e_3, t^{-1}e_4, t^{-2}e_5)}$&$ \mu_{14}$\\
\hline
 
\end{longtable}

\begin{longtable}{ll}

\hline

$\mu_{22}^{t^{-1}} \to \mu_{4}$ &\\ 

$E^t_1= t^2 (t^4-1) e_1 + t^3 (t^2-1)^4 (t^2+1) e_5$ & 
$E^t_2= t^4 (t^4-1)^2 e_2 + t^4(t-1)^6   (t+1)^5 (t^2+1)^2 e_3 $\\ 

$E^t_3= t^6 (t^4-1)^3 e_3$& 
$E^t_4= t^4 (t-1)^4  (t+1)^3 (t^2+1) e_2 + ( t^4 + t^6)  e_4 $ \\
\multicolumn{2}{l}{$E^t_5= t^5(t-1)^2  (t+1) (t^2+1)^2 e_2 + 
 t^5 (t^2-1)^4 (t^2+1)^2 ( t^2-t - 1 ) e_3 + 
 t^5 (t^2 - 1) (t^2+1)^2 e_5$}\\

\hline

  $\mu_{15} \to \mu_6$ &\\ 

$E^t_1= \frac{t^2}{t+1} e_1$& 
$E^t_2= \frac{t^4}{(t+1)^2} e_2  $\\

$E^t_3= \frac{t^6}{(t+1)^3} e_3$& 
$E^t_4= \frac{t^3}{t+1} e_4 + \frac{t^4}{(t+1)^2} e_5$ \\
$E^t_5= \frac{t^5}{(t+1)^2} e_5$\\
 
 \hline

$\mu_{22}^{\alpha} \to \mu_8^{\alpha}$ &\\ 

$E^t_1= t (\alpha + \alpha^3) e_1 + (\alpha + \alpha^3) e_5$
& 
$E^t_2= t^2 (\alpha + \alpha^3)^2 e_2 + 
 t (1 - \alpha) (\alpha + \alpha^3)^2 e_3 $\\

$E^t_3= t^3 (\alpha + \alpha^3)^3 e_3$& 
$E^t_4= t (1 - \alpha) (\alpha + \alpha^3) e_2 + 
 t^2 (\alpha + \alpha^3) e_4 $  \\
$E^t_5= t^2 (1 - 2 \alpha) (\alpha + \alpha^3)^2 e_3 + 
 t^3 (\alpha + \alpha^3)^2 e_5$\\

\hline

$\mu_{11} \to \mu_{9}$&\\ 

$E_1^t= \frac{t^4+1}{t - t^2}e_1 - \frac{(t^4+1) (1 + t^5)}{2 (1 - t)^3 t^4}   e_2 + \frac{t^4+1}{(1 - t)^2} e_4$ &

$E_2^t= \frac{t^4+1}{(1 - t)^2 t^2} e_2 - \frac{(t^4+1)^2}{(1 - t)^4 t^5}
   e_3$ \\

$E_3^t=  \frac{(t^4+1)^2}{(1 - t)^3 t^3} e_3$  &
$E_4^t=  \frac{t^4+1}{(1 - t)^2 t^2}   e_2+\frac{ t^4+1}{(t-1)^2 t^2}e_4 $\\ 
   
$E_5^t=  \frac{t^4+1}{(1 - t)^2 t}e_2 +\frac{(t^4+1)^2}{(t-1)^4 t^4} e_5 $\\

\hline

  $\mu_{18} \to \mu_{13}$ &\\ 
$E^t_1= \sqrt{4 t-1} e_1 - 2  e_2 +  e_4$& 
$E^t_2= 4 t e_2 - 4  \sqrt{4 t-1} e_3 $\\
$E^t_3= 4 t  \sqrt{4 t-1} e_3$& 
$E^t_4= -2  e_2 + 2 t e_4 $ \\ 
$E^t_5= 2 t e_2 - 2 \sqrt{ 4 t-1} e_3 + 2 t \sqrt{4 t-1} e_5$\\

 \hline

  $\mu_{22}^{t} \to \mu_{19}$ &\\ 
$E^t_1= (t-1) (t + t^3) e_1 + t(t-1)^4  (1 + t^2) e_5$ & 
$E^t_2= (t-1)^2 (t + t^3)^2 e_2 - (t-1)^6 (t + t^3)^2 e_3 $\\

$E^t_3= (t-1)^3 (t + t^3)^3 e_3$& 
$E^t_4= (t-1)^4 (t + t^3) e_2 - t (1 + t^2) e_4  $ \\ 

\multicolumn{2}{l}{$E^t_5= (t-1)^4 (  2 t-1) (t + t^3)^2 e_3 - (t-1) (t + t^3)^2 e_5$}\\
\hline

$\mu_{22}^{i+t} \to\mu_{21}^{i}$ & \\

\multicolumn{2}{l}{
$\begin{array}{lcl}

E^t_1&=&
(i-1 + t)^2  (2it + t^2) ( 2 i (i + t) e_2+e_4)
\\

E^t_2&=&
(i-1+ t)^4  (2 it  + t^2)^2 (- (i + t) e_2 + 2 (i + t) (2 i + 2 t - i t^2)e_3+(1 + i + t) e_5  )\\

E^t_3&=&
-(i-1  + t)^6  (i + t) (2 it + t^2)^3 (i+1  + t) e_5\\

E^t_4&=
& {\tiny \frac{i+1}{2(i + t)}  
\left(
\begin{array}{l}(-8 t + (8 + 28 i) t^2 + (40 - 24 i) t^3 - (30 + 30 i) t^4 - (12 - 20 i) t^5 + (7 + 2 i) t^6 - i t^7) e_1+\\
((-8 - 8 i) t - (20 - 36 i) t^2 + (60 + 12 i) t^3 - (6 + 46 i) t^4 - (16 - 8 i) t^5 + (2 + 2 i) t^6) e_2+\\
((12 + 4 i) t - (6 + 42 i) t^2 - (52 - 30 i) t^3 + (35 + 33 i) t^4 + (12 - 21 i) t^5 - (7 + 2 i) t^6 + i t^7) e_4+\\
((-8 + 8 i) t + (52 + 20 i) t^2 - (4 + 108 i) t^3 - (98 -       46 i) t^4 + (44 + 40 i) t^5 + (6 - 16 i) t^6 - 2 t^7) e_5
\end{array}
\right)

      }\\

E^t_5&=&
(1 + i) (1 + i - it)   (i + t)^2 (2 + 2 i + (1 - 3 i) t -t^2)^2 t^2 e_2 - \\
&&(2 + 2 i)  (i + t)^2 (2 + 2 i + (1 - 3 i) t -     t^2)^2 (1 + i - (2 + 2 i) t - (1 - 3 i) t^2 + t^3)t^2  e_3+\\
&&\multicolumn{1}{r}{(1 - i) (i-1 + t)^2   (i+1 + t) (2 + 2 i + (1 - 3 i) t -t^2)^2t^2 e_5}  
\end{array}

      $}\\
      \hline
      
      $\mu_{22}^{-i+it} \to\mu_{21}^{-i}$ & \\

\multicolumn{2}{l}{
$\begin{array}{lcl}
E^t_1 & = & 
(t-2) (i-1 + t)^2 t e_4 + 
 2 (t-2) (t-1 ) (i-1 + t)^2 t e_5 \\

E^t_2 & = & 
 (2 - t)^2
 \left( 
\begin{array}{l}
i (1 - t) (i-1  + t)^4 t^2 e_2 - 
  2 (1 - t)  (i-1  + t)^4  (1 + i - (2 + i) t + 
    t^2)  t^2e_3+\\ 
    \multicolumn{1}{r}{i  (t-1 - i) (i-1 + t)^4 t^2 e_5}
    \end{array}
    \right)\\
 
E^t_3&=&
(t-2)^3 (t-1) (t-1 - i) (i-1 + t)^6 t^3 e_3\\

E^t_4&=&
\frac{(1 + i)(t-2) (i-1  + t)^2 t}{2(t-1)}
\left(
\begin{array}{l}
 (2 - 4 t +3 t^2 - t^3) e_1-
  (3 - i - (5 - i) t + (3 - i) t^2 - t^3) e_4+\\
\multicolumn{1}{r}{ (2 - (4 + i) t + (2 +i ) t^2) e_5}
\end{array}
   \right)\\

E^t_5 & =& 

(1 - i) (2 - t)^2 (1 - t)^2 (i-1  + t)^3 t^2 e_2 + 
(2 + 2 i) (2 - t)^2 (1 - t)^4 (i-1  + t)^3 t^2 e_3+\\
&& \multicolumn{1}{r}{(i-1 ) (2 - t)^2 (i-1  + t)^3   (2 - 2 t + t^2)t^2 e_5} \end{array}$}\\

\hline

$\mu_{17}^{t} \to{\mathfrak V}_{2+3}$ &\\ 

\multicolumn{2}{l}{
$\begin{array}{lcl}
E^t_1& =&\frac{t}{2 \lambda\sqrt{(1 + 4 \lambda)^3}}\Big((1 + 6 \lambda+ 8 \lambda^2) e_1 - \sqrt{(1 + 4 \lambda)^3} e_4  + \lambda e_5\Big) \\

E^t_2& =&\frac{t}{4 \lambda^2 \sqrt{(1 + 4 \lambda)^3}}
\Big( 
(1+ 4 \lambda) e_1 - \sqrt{(1 + 4 \lambda)^3} e_4 + 2 \lambda e_5
\Big) \\

E^t_3& =&  -\frac{t^2}{4 \lambda^2  \sqrt{(1 + 4 \lambda)^3}}
\Big( 
 - 
 2 (1 + 3 \lambda) \sqrt{1 + 4 \lambda} e_2 + (1 + 7 \lambda) e_3+2 (1 + 5 \lambda + 4 \lambda^2) e_5\Big) \\

E^t_4& =&  -\frac{t^2}{4 \lambda^2 \sqrt{(1 + 4 \lambda)^3}}
\Big( 
 - 
 2 (1 + 3 \lambda) \sqrt{1 + 4 \lambda} e_2 + (1 + 9 \lambda + 8 \lambda^2) e_3+2 (1 + 5 \lambda + 4 \lambda^2) e_5\Big) \\

E^t_5& =&  
-\frac{t^2}{4 \lambda^2  \sqrt{(1 + 4 \lambda)^3}}
\Big( 
- 
 2 (1 + 2 \lambda) \sqrt{1 + 4 \lambda} e_2 + (1 + 8 \lambda) e_3+(2 + 8 \lambda) e_5 \Big)

\end{array}$}\\

\hline

\end{longtable}

After  careful  checking  dimensions of orbit closures for the rest of algebras, we have 

\begin{center}  
$\dim \mathcal{O}(\mu_{1,4}^5)=\dim \mathcal{O}(\lambda_{6}^\alpha)=\dim \mathcal{O}(\mu_{11}) = 
\dim \mathcal{O}(\mu_{15})=\dim \mathcal{O}(\mu_{17})=$

$\dim \mathcal{O}(\mu_{18})=\dim \mathcal{O}(\mu_{20})=\dim \mathcal{O}(\mu_{22}^\alpha)=
\mathcal{O}({\mathfrak V}_{4+1})=20,$ 
$\dim \mathcal{O}({\mathfrak V}_{3+2})=24.$ 

 \end{center}
 Hence, 
$\mu_{1,4}^5, \lambda_{6}^\alpha, \mu_{11}, \mu_{15},  \mu_{17},  \mu_{18}, \mu_{20},  \mu_{22}^\alpha, {\mathfrak V}_{4+1}$ and ${\mathfrak V}_{3+2}$ give $10$ irreducible components.

\end{Proof}

\begin{Remark}
Our Theorem C talks that the variety of complex $5$-dimensional nilpotent associative algebras has $11$ irreducible components. We defined  dimensions of all irreducible components, generic and rigid algebras in this variety.
It is improving some early results in this direction.
Namely, the result from \cite{mak93} talks that in this variety there are $13$ irreducible components.
Also, the result from \cite{mak93} does not talk about dimensions of irreducible components, generic algebras, and rigid algebras of this variety.
On the other hand, there are some inaccuracies in the definition of irreducible components in \cite{mak93}. 
For example, 
the algebra from the case [(a), page 36], 
has nilindex 2 and the basis $\{ x, x^2, y, xy, x^2y\}.$

\end{Remark}


\begin{thebibliography}{99}
 

 \bibitem{ale3}
Alvarez M.A., 
Degenerations of $8$-dimensional $2$-step nilpotent Lie algebras,
Algebras and Representation Theory, 2020, DOI: 10.1007/s10468-020-09987-5
 
 
 \bibitem{aut}
Autenried C., Furutani K., Markina I., Vasil'ev A., 
    Pseudo-metric 2-step nilpotent Lie algebras,
    Advances in Geometry, 18 (2018), 2, 237--263.

\bibitem{cal}
Calder\'on A.,  Fern\'andez Ouaridi A., Kaygorodov I.,  
On the classification of bilinear maps with radical of a fixed codimension, Linear and Multilinear Algebra, 2020, DOI:10.1080/03081087.2020.1849001

\bibitem{BC99} 
Burde D., Steinhoff C.,
    Classification of orbit closures of $4$--dimensional complex Lie algebras,
    Journal of Algebra, 214 (1999), 2, 729--739.
 

 \bibitem{chouhy}
Chouhy S.,
    On geometric degenerations and Gerstenhaber formal deformations,
    Bulletin of the London Mathematical Society, 51 (2019),  5, 787--797.
  
  
 \bibitem{cibils}  Cibils C., 
    $2$-nilpotent and rigid finite-dimensional algebras,
    Journal of the London Mathematical Society (2), 36 (1987), 2, 211--218.


  \bibitem{fF68}
Flanigan F.\ J., 
    Algebraic geography: {V}arieties of structure constants, 
    Pacific Journal of Mathematics, 27 (1968), 71--79.
      
     \bibitem{fb}
     Farinati M.,  Jancsa A., 
     Lie bialgebra structures on $2$-step nilpotent graph algebras,
        Journal of  Algebra, 505 (2018), 70--91.
        
        
  \bibitem{gabriel}
Gabriel P.,
Finite representation type is open,
Proceedings of the International Conference on Representations of Algebras (Carleton Univ., Ottawa, Ont., 1974), pp. 132--155.
 
 \bibitem{ger63}
Gerstenhaber M.,
    On the deformation of rings and algebras,
    Annals of Mathematics (2), 79 (1964), 59--103.
	
 \bibitem{gorb93} 
Gorbatsevich V., 
    Anticommutative finite-dimensional algebras of the first three levels of complexity, 
    St. Petersburg Mathematical Journal, 5 (1994), 505--521.
 
 
\bibitem{GRH}
Grunewald F.,  O'Halloran J.,
    Varieties of nilpotent Lie algebras of dimension less than six,
    Journal of Algebra, 112 (1988), 2, 315--325.
    
\bibitem{hs08}
Horn R., Sergeichuk V.,
    Canonical matrices of bilinear and sesquilinear forms,
    Linear Algebra and its Applications, 428 (2008), 193--223.
    
\bibitem{ikm20}
    Ismailov N.,    Kaygorodov I.,     Mashurov F., 
    The algebraic and geometric classification of nilpotent assosymmetric algebras, 
    Algebras and Representation Theory, 24 (2021),  1, 135--148.  
 
\bibitem{karim20}
Karimjanov I.,
The classification of 5-dimensional complex nilpotent associative algebras,
Communications in Algebra, 49 (2021),  3, 915--931.

\bibitem{kaladra20} 
Karimjanov I.,  Ladra M., 
Some classes of nilpotent associative algebras,
Mediterranean Journal of Mathematics, 17 (2020), 70.   


\bibitem{klp20}
Kaygorodov I., Lopes S., Popov Yu.,
Degenerations of nilpotent associative commutative algebras,
Communications in Algebra,    48  (2020), 4, 1632--1639.



\bibitem{kkl21}
Kaygorodov I., Khrypchenko M., Lopes S.,
    The geometric classification of nilpotent  algebras,
     arXiv:2102.10392
    
    \bibitem{kkp20}
Kaygorodov I., Khrypchenko M., Popov Yu., 
The algebraic and geometric classification of nilpotent terminal algebras, Journal of Pure and Applied Algebra,  225 (2021), 6, 106625.


\bibitem{kppvcor}
Kaygorodov I., Popov Yu., Pozhidaev A., Volkov Yu.,
Corrigendum to  "Degenerations of  Zinbiel and nilpotent Leibniz  algebras", 
Linear and Multilinear Algebra, 2020, DOI:  10.1080/03081087.2020.1749543



 \bibitem{kv17}
Kaygorodov I., Volkov Yu., 
    Complete classification of algebras of level two,  
    Moscow Mathematical Journal, 19 (2019), 3,  485--521.

\bibitem{lo14}  
Lauret J.,  Oscari D., 
    On non-singular $2$-step nilpotent Lie algebras, 
     Mathematical Research Letters, 21 (2014), 3, 553--583.
 
  
      
\bibitem{mak93}
Makhlouf A.,  
    The irreducible components of the nilpotent associative algebras,  
    Revista Matemática Complutense, 6 (1993), 1, 27--40.




\bibitem{maz79}
Mazzola G.,
    The algebraic and geometric classification of associative algebras of dimension five, 
    Manuscripta Mathematica, 27 (1979), 1, 81--101. 
     
\bibitem{shaf}
    Shafarevich I., 
    Deformations of commutative algebras of class $2,$ Leningrad Mathematical Journal, 2 (1991), 6, 1335--1351.
 
 
 

\end{thebibliography}
\end{document}